\documentclass[11pt]{article}

\usepackage[english]{babel}
\usepackage{comment}
\usepackage[letterpaper,top=2cm,bottom=2cm,left=3cm,right=3cm,marginparwidth=1.75cm]{geometry}

\usepackage{cite}
\usepackage{subcaption}
\usepackage{stmaryrd}
\usepackage{algorithm}
\usepackage{algpseudocode}
\usepackage{amsmath}
\usepackage{graphicx}
\usepackage{listings}

\lstdefinestyle{mystyle}{
    language=Matlab,
    basicstyle=\ttfamily\small,
    keywordstyle=\color{black},
    commentstyle=\color{gray},
    stringstyle=\color{green!50!black},
    showstringspaces=false,
    columns=fullflexible,
    keepspaces=true
}

\usepackage{rotating}
\usepackage[dvipsnames]{xcolor}
\usepackage[utf8]{inputenc} 
\usepackage{amsfonts}
\usepackage{amsthm}
\usepackage{mathtools}
\usepackage{listings}
\usepackage{amssymb}
\usepackage{multirow}
\usepackage{booktabs}
\usepackage{tikz}
\usetikzlibrary{patterns}
\usetikzlibrary{patterns,arrows.meta}
\usepackage[colorlinks=true, allcolors=blue]{hyperref}
\newtheorem{theorem}{Theorem}[section]

\newtheorem{lemma}{Lemma}[section]
\newtheorem{proposition}{Proposition}[section]

\newtheorem{definition}{Definition}[section]

\title{Incremental Column Subset Selection via Conditional Determinantal Point Processes}
\author{Laura Grigori, Zhipeng Xue}
\date{}
\makeatletter
\newcommand\tabcaption{\def\@captype{table}\caption}
\newcommand\figcaption{\def\@captype{figure}\caption}
\makeatother

\newcommand{\be}{\begin{eqnarray}}
	\newcommand{\ee}{\end{eqnarray}}
\newcommand{\beno}{\begin{eqnarray*}}
	\newcommand{\eeno}{\end{eqnarray*}}
\newcommand{\barr}[1]{\begin{array}{#1}}
	\newcommand{\earr}{\end{array}}

\newcommand{\R}{\mathbb{R}}
\newcommand{\Rmn}{\mathbb{R}^{m\times n}}

\newcommand{\Rnn}{\mathbb{R}^{n\times n}}

\DeclareMathOperator*{\argmax}{arg\,max}
\DeclareMathOperator*{\argmin}{arg\,min}
\algrenewcommand\algorithmicrequire{\textbf{Input:}}
\algrenewcommand\algorithmicensure{\textbf{Output:}}
\begin{document}

\maketitle
\begin{abstract}
    Column subset selection aims to seek a small set of representative columns that accurately approximates a given matrix. We study this problem from the perspective of determinantal point processes (DPPs) and their relation to several existing algorithms. To support incremental column selection, we introduce conditional DPPs, which can be applied to any column selection strategy. In particular, we combine it with adaptive randomized pivoting(ARP), and develop a multi-stage ARP (MSARP) algorithm. We establish theoretical guarantees on the expected Frobenius-norm approximation errors for these methods. In addition, we propose a fixed-precision variant of ARP that adaptively determines the number of selected columns. Numerical experiments on column subset selection and Nystr\"{o}m approximation show that MSARP achieves accuracy comparable to standard ARP while enabling incremental selection.

\end{abstract}
\section{Introduction}
Low-rank matrix approximation is a fundamental tool for computational science, machine learning and data analysis\cite{halko_finding_2010}\cite{markovsky2012low}\cite{tropp2023randomizedalgorithmslowrankmatrix}. Given a matrix $A\in \Rmn$, where both $m$ and $n$ are very large, we aim to approximate it by
$$
A \approx M N
$$
where $M\in \R^{m\times k}$, $N\in \R^{k\times n}$ and $k\ll \min\{m,n\}$ is called the target rank. Among the many approaches for low-rank approximation, the column subset selection problem(CSSP) seeks some representative columns of $A$ to form the matrix $M$. In this setting, we denote $M$ as $C$ and the optimal $N$ is $C^{\dagger} A$. Unlike spectral methods such as singular value decomposition(SVD), column selection yields a low-rank approximation 
$$
A \approx C C^{\dagger} A
$$
preserving some structure of the original matrix. Consequently, CSSP has become important in problems as feature selection and experimental design\cite{book_experiment_design}\cite{qian2020subset}\cite{sener2018active}.

Various algorithms have been developed for CSSP. Classical deterministic methods include column-pivoted QR(CPQR) and strong rank-revealing QR\cite{srrqr}. Recently, many randomized approaches for CSSP have been proposed, including leverage score sampling, volume sampling\cite{volume_sampling1}, and a recent adaptive randomized pivoting algorithm (ARP)\cite{cortinovis2025arp}. However, many randomized algorithms are designed as one-shot procedures. Given a target rank $k$, those algorithms select a subset of $k$ columns and terminate. If the subset needs to be enlarged, it is often unclear how the previous selection can be included. This limitation is significant in many practical scenarios where the desired target rank is unknown and the previous selection must be retained. For example, in machine learning applications such as feature selection or active learning\cite{Ash2020Deep}, DPP-based methods are often used to identify a diverse subset of samples for labeling. Since labeling is typically expensive, the labeled samples cannot be discarded. A similar situation arises in the placement of  sensors \cite{hibbard2023randomized} \cite{krause2008near}. If the budget increases, one may consider installing a few more sensors in addition to the already deployed sensors.

In this paper, we establish a posteriori selection strategies. To this end, we study conditional determinantal point processes (conditional DPPs) as a remedy for any initial column selection algorithm and give a bound for the conditional expectation of the approximation errors. In particular, we combine conditional DPPs and ARP and propose two new algorithms, called Conditional Adaptive Random Pivoting (CARP) and Multi-Stage Adaptive Random Pivoting (MSARP), which make ARP more flexible. We establish approximation guaranties for CARP and MSARP. If $k_i$ columns are sampled at the $i$th stage and $d$ is the total number of the selected columns, the resulting error bound takes the form
\[
\prod_{i}(1+k_i)\|A-\llbracket A \rrbracket_t\|_F^2,
\]
where $\llbracket A \rrbracket_d$ is the approximation obtained from the $d$-truncated SVD, 
showing that each incremental sampling stage incurs only a multiplicative penalty. This provides a bridge between some global algorithms and some greedy ones.

The rest of this paper is organized as follows. In section~\ref{sec:framework}, we present a DPP-based perspective on column subset selection. We first review DPPs and volume sampling, and then establish their connections with adaptive randomized pivoting (ARP) and randomly pivoted Cholesky (RPCholesky). We further discuss several related column selection methods, including strong RRQR, column pivotd QR (CPQR), leverage score sampling, nuclear maximization and the algorithms of Osinsky and Stewart. In section~\ref{sec:conditional_dpp}, we introduce conditional determinantal point processes, which can be applied to any column selection strategy. In particular, we combine it and adaptive randomized pivoting(ARP), and develop a posterior processing method, conditional ARP (CARP) and a multi-stage ARP (MSARP) algorithm. We establish theoretical guarantees on the expected Frobenius-norm approximation errors for these methods. We prove that multi-stage ARP inherits the approximation guarantees of ARP. The resulting analysis reveals a connection between global DPP-based sampling strategies and some greedy algorithms. In section~\ref{sec:arp_tol}, we propose a fixed-precision variant of ARP that adaptively determines the number of selected columns. In section~\ref{sec:numerics}, we show the results of some numerical experiments on columns subset selection problems and Nystr\"{o}m approximation to validate the theoretical guarantees of multi-stage ARP.

\section{A Unifying Perspective}
\label{sec:framework}
In this section, we provide a determinantal point process (DPP)-based perspective to interpret several existing algorithms. We first recall the definitions of DPP and its variants, including a relevant algorithm, volume sampling. We then show the connections between the recent algorithm adaptive randomized pivoting (ARP) and a projection DPP. In addition, we formalize the joint probability of randomly pivoted Cholesky (RPCholesky) and compare it with that of a fixed-size DPP. Finally, we also discuss several other algorithms, including strong rank-revealing QR (strong RRQR), column-pivoted QR (CPQR), levearge score sampling and the derandomized version of ARP (Algorithm 1 in \cite{OSINSKY2025359}). 
\subsection{Determinantal Point Process (DPP) and Volume Sampling}
Firstly, we briefly recall the determinantal point process (DPP) and its variant, volume sampling. These two techniques are used to obtain a representative and diverse subset of a large data set $X$. We focus here only on the case that is relevant to column subset selection. Thus, we let the ground set $X$ to be $\{1,\dots,n\}$, where $n$ is the number of columns of $A$.

\begin{definition}[General DPP]
A \emph{determinantal point process (DPP)} with kernel $K\in\mathbb{R}^{n\times n}$ (symmetric positive semidefinite, $0\preceq K\preceq I$) is a subset of random indices $S\subseteq \{1,\dots,n\}$ such that for any $T\subseteq \{1,\dots,n\}$
\[
\mathbb{P}(T\subseteq S)=\det\bigl(K(T,T)\bigr),
\]
We denote it by $S \sim DPP(K).$
\end{definition}

However, the condition $0\preceq K\preceq I$ in the above definition can be rather restrictive in practice. Moreover, the cardinality of the sampled set $S$ is a random variable, which can cause some inconvenience in some applications. To address these limitations, alternative formulations of DPPs have been proposed. In particular, the \emph{$L$-ensemble DPP} provides a more flexible variant through a positive semidefinite matrix $L$, removing the spectral constraint $0\preceq K\preceq I$. In addition, \emph{fixed-size DPP} (also known as $k$-DPP) allows one to sample subsets with a prescribed cardinality. We briefly introduce these two variants below.

\begin{definition}[L-ensemble DPP and fixed-size DPP]
Let $L\in \Rnn$ be a symmetric positive semidefinite matrix. An $L$-ensemble DPP is a subset of random indices $S\subseteq \{1,\dots,n\}$ defined by
\[
\mathbb{P}(S)\propto \det\bigl(L(S,S)\bigr).
\] 
We denote it as $S \sim DPP_L(L).$

The fixed-size DPP (or $k$-DPP) is the distribution of $S\sim DPP_L(L)$ conditioned on $|S|=k$, i.e.,
\[
\mathbb{P}(S)\propto \det\bigl(L(S,S)\bigr), \qquad |S|=k.
\]
We denote it by $S \sim k\text{-}\mathrm{DPP}_L(L).$
\end{definition}

The L-ensemble DPP is a general DPP and the corresponding kernel $K$ is related to $L$ by
\[
K = L (L + I)^{-1}.
\] For $S\sim DPP(K)$, although the cardinality of $S$ is random, the distribution of $|S|$ can be characterized by the eigenvalues of $K$. We introduce the following proposition (which corresponds to Theorem 7 in \cite{BenHough2006}. 
\begin{proposition}\label{prop:size_of_s}
Given a kernel $K\in\mathbb{R}^{n\times n}$ (symmetric positive semidefinite, $0\preceq K\preceq I$) with rank $r$, if $S \sim DPP(K)$, then we have 
\[
|S| = \sum\limits_{i=1}^{r} B_i(\lambda_i),
\]
where $B_i\sim\mathrm{Bernoulli}(\lambda_i)$ are independent random variables. 
\end{proposition}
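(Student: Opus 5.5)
The statement should be read as an equality in distribution: $|S|$ has the same law as $\sum_{i=1}^r B_i$, where $\lambda_1,\dots,\lambda_r$ are the nonzero eigenvalues of $K$. The cleanest route is through generating functions. Both sides take values in $\{0,\dots,n\}$, so it suffices to show that they have the same probability generating function, namely
\[
\mathbb{E}\bigl[z^{|S|}\bigr]=\prod_{i=1}^{r}\bigl(1-\lambda_i+\lambda_i z\bigr)=\det\bigl(I-K+zK\bigr).
\]
The right-hand expression is the generating function of the independent Bernoulli sum. Its determinant form follows from the spectral decomposition $K=\sum_i \lambda_i v_iv_i^T$: the eigenvalues of $I+(z-1)K$ are $1+(z-1)\lambda_i$, and the eigenvalues equal to zero contribute a factor of $1$.

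For the left-hand side, the plan is to expand $z^{|S|}$ in terms of inclusion events. Write $z=1+w$. Then $(1+w)^{|S|}=\sum_{T\subseteq S} w^{|T|}$, so taking expectations and using the defining property of $DPP(K)$ gives
\[
\mathbb{E}\bigl[(1+w)^{|S|}\bigr]=\sum_{T\subseteq\{1,\dots,n\}} w^{|T|}\,\mathbb{P}(T\subseteq S)=\sum_{T} w^{|T|}\det\bigl(K(T,T)\bigr).
\]
Next, invoke the standard principal-minor expansion $\det(I+wK)=\sum_T w^{|T|}\det(K(T,T))$. This is the coefficient expansion of the characteristic polynomial, and it follows from multilinearity of the determinant in the columns of $I+wK$. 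Combining the two displays gives $\mathbb{E}[z^{|S|}]=\det(I+(z-1)K)$, and together with the first paragraph this proves the claim.

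The only step needing care is the first identity, $\mathbb{E}[\sum_{T\subseteq S}w^{|T|}]=\sum_T w^{|T|}\mathbb{P}(T\subseteq S)$. It holds by linearity of expectation applied to the finite sum $\sum_T w^{|T|}\mathbf{1}[T\subseteq S]$, so it poses no real difficulty. The convention $\det K(\emptyset,\emptyset)=1$ has to be fixed so that the $T=\emptyset$ term is correct.

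I regard the minor-expansion identity as the main obstacle, though it is classical. If a fully self-contained argument is preferred, an alternative is to first prove it for projection kernels ($\lambda_i\in\{0,1\}$), where $|S|=\operatorname{rank}K$ almost surely, and then use the Hough et al.\ mixture representation $DPP(K)=\mathbb{E}_{I}[DPP(K_I)]$ with $I$ the random set of eigenvectors kept independently with probability $\lambda_i$. That route, however, requires establishing the mixture representation itself. The generating-function argument above avoids it, so I would use the generating-function route.
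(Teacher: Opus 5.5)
Your generating-function proof is correct, and you read the statement the right way, as an equality in law. Each step goes through as written: the expansion $(1+w)^{|S|}=\sum_{T\subseteq S}w^{|T|}$, linearity of expectation against $\mathbb{P}(T\subseteq S)=\det K(T,T)$, the principal-minor expansion of $\det(I+wK)$, and matching coefficients of two polynomials in $z$. The paper gives no proof of its own. It cites Theorem~7 of Hough et al., which is proved along the route you set aside. There, $\mathrm{DPP}(K)$ is shown to be a mixture of projection DPPs with kernels $\sum_i I_i v_iv_i^T$, where the $I_i\sim\mathrm{Bernoulli}(\lambda_i)$ are independent; this is checked by averaging a Cauchy--Binet expansion of the principal minors of $\sum_i I_i v_iv_i^T$ over the $I_i$. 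That is combined with a separate argument that a projection DPP has exactly $\operatorname{rank}$-many points. Your route is shorter and uses only the inclusion-probability definition given in the paper. It also yields the projection case as a corollary rather than an ingredient: $\lambda_i=1$ for all $i$ gives $|S|=r$ almost surely. That is exactly what the paper later invokes for $\mathrm{DPP}(VV^T)$ and in the lemma identifying $\mathrm{VS}_k(A)$ with $\mathrm{DPP}(VV^T)$. The same computation with $K(D,D)$ in place of $K$ gives the law of $|S\cap D|$ for any $D$. The mixture route gives a strictly stronger, structural conclusion: equality in law of the whole random set, not just its size. That is the basis of spectral sampling algorithms for DPPs and explains why projection DPPs, and hence ARP, are the natural building block. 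Its cost is the extra construction you identified.
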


We now recall volume sampling, which was proposed in \cite{volume_sampling1}. For CSSP of $A$, it defines a distribution over all subsets of columns $A(:,S)$ with $|S| = k.$

\begin{definition}[Volume sampling]
Let $A \in \mathbb{R}^{m\times n}$ and let $k \le \operatorname{rank}(A)$. 
The $k$-volume sampling distribution $\mathrm{VS}_k(A)$ over subsets 
$S \subset \{1,\dots,n\}$ with $|S|=k$ is defined by
\[
\mathbb{P}(S) \propto \det(A^T A(S,S)).
\]
\end{definition}

From a geometric perspective, $\det(A^TA(S,S))$ equals the squared $k$-dimensional volume 
of the parallelepiped spanned by the selected columns. It is easy to see that 
if we set $L = A^T A$, then $k$-$\mathrm{DPP}_L(L)$ and $\mathrm{VS}_k(A)$ define 
the same distribution, $i.e. $,
\be
k\text{-}\mathrm{DPP}_L(A^TA) = \mathrm{VS}_k(A).
\ee
This provides one connection between DPPs and volume sampling, as mentioned in \cite{epperly2026adaptiverandomizedpivotingvolume}. 

We introduce a special case to end this section. When the kernel of a DPP is set as an orthogonal projector, it is referred to as a projection DPP. 

\begin{definition}[Projection DPP]
A projection DPP is a DPP with associated kernel $K = VV^T$ where $V\in \R^{n\times k}$ is an orthogonal matrix, $i.e.$ $V^TV = I$. 
\end{definition}

By Proposition~\ref{prop:size_of_s}, we know that if $S$ is sampled from a projection DPP, $i.e.\ S\sim DPP(VV^T)$, then the cardinality of $S$ must be $k$. This is useful for later derivation.
\subsection{Connection between ARP, projection DPP, and volume sampling}
We introduce the ARP algorithm. For the CSSP of the matrix $A\in \R^{m\times n}$, it takes as input an orthonormal matrix $V\in \R^{n\times k}$ that satisfies
\be
\label{eq:rowspaceapprox}
\|A - AVV^T\|_F \approx 0,
\ee
and identifies a set of indices $S$ of informative columns of the matrix $A$ with $|S| = k$. For such a $V$, the best choice is $V_{opt}$, the first $k$ right singular vectors obtained by computing the SVD of $A$. However, it is expensive to compute an SVD for a large-scale matrix $A$. In \cite{cortinovis2025arp}, Cortinovis and Kressner suggest using a randomized rangefinder. They propose to compute the sketch of the matrix $A^T$, $i.e.\ A^T \Omega$, where $\Omega \in \R^{m \times k}$ is a random sketching matrix, and then obtain $V = orth(A^T \Omega)$. Then randomly pivoted QR method on $V^T$ is used to select a subset of $k$ columns $A$. At the $i$ iteration the $j$th column with probability proportional to $\|V(j,:)\|_2^2$ and $V$ is updated by orthogonalizing all columns of $V^T$ against the chosen vector:
\begin{subequations}
\begin{align}
    &\mathbb{P}(s_i = j) \propto \|V(j,:)\|_2^2,\\
    &V^T \leftarrow (I - V(s_i,:)^{\dagger}V(s_i,:))V^T.
\end{align}
\end{subequations}
The ARP algorithm is presented in Algorithm \ref{alg:ARPnew}. We add the final updated $V^{(k)}$ in the output for later derivation. Concerning the quality of the selected columns, if they are indexed by $S$, it is proved in \cite{cortinovis2025arp} that the oblique projector $E_{S}(V^TE_{S})^{-1}V^T$ works well for approximating the matrix $A$, as described in the following theorem.

\begin{theorem}[Theorem 2.4 in \cite{cortinovis2025arp}] \label{thm:arp}
    Let $A \in \R^{m \times n}$ and let $V \in \R^{n \times k}$ be an orthonormal basis. Then the random index set $S$ returned by Algorithm~\ref{alg:ARPnew} satisfies
    \begin{equation}\label{eq:arp_bound}
        \mathbb{E}\left [\|A - A(:,S) V(S,:)^{-T} V^T\|_F^2\right ] \leq (k+1) \|A - AVV^T\|_F^2.
    \end{equation}
\end{theorem}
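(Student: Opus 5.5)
The plan is to identify the law of the index set $S$ returned by Algorithm~\ref{alg:ARPnew} as the projection DPP with kernel $VV^T$, and then compute the expected error term by term. The first step is to show by induction on the iteration count that the sequential procedure produces $S$ with $\mathbb{P}(S)=\det(V(S,:))^2$. Suppose after $i$ steps the selected rows are $s_1,\dots,s_i$. The updated matrix $V^{(i)}$ has rows equal to the rows of $V$ projected onto the orthogonal complement of $\mathrm{span}\{V(s_1,:),\dots,V(s_i,:)\}$. Then
\[
\|V^{(i)}\|_F^2 = k-i,
\]
and $\|V^{(i)}(j,:)\|_2^2$ equals $\det(V(S_i\cup j,:)V(S_i\cup j,:)^T)/\det(V(S_i,:)V(S_i,:)^T)$ by the Schur complement / base-times-height formula. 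Multiplying the conditional probabilities telescopes to $\det(V(S,:))^2/k!$ for each ordering. Summing over the $k!$ orderings gives $\mathbb{P}(S)=\det(V(S,:))^2$, which sums to one by Cauchy--Binet.

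The second step is to write the error of the oblique projector. Put $P_S=E_S V(S,:)^{-1}$, so that $A(:,S)V(S,:)^{-T}V^T = A E_S V(S,:)^{-T}V^T$. Because $V^T E_S V(S,:)^{-T}... $ reproduces $V$, i.e. $AVV^T E_S V(S,:)^{-T} V^T = AVV^T$, we get
\[
A - A(:,S)V(S,:)^{-T}V^T = (A-AVV^T)\bigl(I - E_S V(S,:)^{-T}V^T\bigr).
\]
Set $R=A-AVV^T$, so that $RV=0$. Then $R\,E_SV(S,:)^{-T}V^T$ has columns in the range of $R$ and row space in $\mathrm{range}(V)$, while $R$ itself is orthogonal to $V$ on the right. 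Hence the two terms are Frobenius-orthogonal, and
\[
\|\cdot\|_F^2=\|R\|_F^2+\|R(:,S)V(S,:)^{-T}\|_F^2 .
\]

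The third step, which I expect to be the main computation, is to bound
\[
\mathbb{E}\,\|R(:,S)V(S,:)^{-T}\|_F^2=\sum_S \det(V(S,:))^2\,\mathrm{tr}\bigl(R(:,S)V(S,:)^{-T}V(S,:)^{-1}R(:,S)^T\bigr)
\]
by $k\|R\|_F^2$. Using the cofactor identity
\[
V(S,:)^{-1}=\mathrm{adj}(V(S,:))/\det(V(S,:)),
\]
the weight $\det^2$ cancels one factor. By Cramer's rule, the entries of $R(:,S)V(S,:)^{-T}$ are ratios $\det$(replace row $s$ of $V(S,:)$ by a row built from $R$)$/\det V(S,:)$. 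After squaring and multiplying by $\det(V(S,:))^2$, each term becomes $\sum_{s\in S}\sum_{\ell}\det(V(S\setminus s,:)\,\text{with one row }e_\ell^T)^2\,R(\cdot,s)^2$-type expressions. Cleaner: for fixed $s\in S$, write $T=S\setminus\{s\}$; the contribution is $\|R(:,s)\|^2\sum_\ell \det([V(T,:);e_\ell^T])^2$. Summing over $\ell$ gives $\det(V(T,:)V(T,:)^T)$ by Cauchy--Binet. Then the total sum over pairs $(T,s)$ with $s\notin T$ is at most $\sum_s\|R(:,s)\|^2\sum_{|T|=k-1}\det(V(T,:)V(T,:)^T)=k\|R\|_F^2$, using $\sum_{|T|=k-1}\det(V(T,:)V(T,:)^T)=e_{k-1}(1,\dots,1)=k$. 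Combining with the second step yields $(k+1)\|R\|_F^2$, as claimed. The delicate point is verifying the Cramer-rule rewriting exactly: the row-$s$ component of $V(S,:)^{-1}$ applied to the $\ell$-th coordinate direction equals $\pm\det$ of $V(S,:)$ with row $s$ replaced by $e_\ell^T$, divided by $\det V(S,:)$.
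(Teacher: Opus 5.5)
Your Steps 1 and 2 are sound and match what the paper relies on: ARP $=\mathrm{DPP}(VV^T)=\mathrm{VS}_k(V^T)$, and Osinsky's identity (Lemma~\ref{lma:osinsky}), which gives $\|R\|_F^2+\|R(:,S)V(S,:)^{-T}\|_F^2$ with $R=A-AVV^T$. (One small slip: $\|V^{(i)}\|_F^2=k-i$ should refer to the trailing block $V^{(i)}(:,i+1:k)$.)

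The gap is in Step 3. Fix a row $r$ of $R$, write $M=V(S,:)$ and $x=R(r,S)^T$, and let $C_{s\ell}$ be the cofactors of $M$. Then
\[
\det(M)^2\,\|M^{-1}x\|_2^2=\|\operatorname{adj}(M)\,x\|_2^2=\sum_{\ell=1}^k\Bigl(\sum_{s\in S}C_{s\ell}\,x_s\Bigr)^2 .
\]
Your ``contribution for fixed $s$'' keeps only $\sum_{s}x_s^2\sum_\ell C_{s\ell}^2$. In other words, you square a sum over $s$ as if it were a sum of squares. The cross terms $C_{s\ell}C_{s'\ell}x_sx_{s'}$ with $s\neq s'$ do not cancel after summing over $S$. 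With $K=VV^T$ they add up to $(k-1)\sum_j K_{jj}\|R(:,j)\|_2^2\ge 0$.

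A concrete check: take $n=3$, $k=2$, $V$ an orthonormal basis of $\mathbf{1}^\perp$, and $A=R=\mathbf{1}^T/\sqrt3$. Each $2$-subset has probability $\det K(S,S)=1/3$, and $\|R(:,S)V(S,:)^{-T}\|_F^2=x^TK(S,S)^{-1}x=2$, so the expectation is $2$. Your diagonal expression equals $4/3$.

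So your chain bounds a quantity that is in general strictly smaller than the one you need. That it lands on $k\|R\|_F^2$ is a coincidence. Extending $\sum_{T\not\ni s}$ to all $(k-1)$-subsets adds $\sum_s\|R(:,s)\|_2^2\sum_{T\ni s}\det K(T,T)=(k-1)\sum_s K_{ss}\|R(:,s)\|_2^2$. That is exactly the size of the dropped cross terms.

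The repair is to apply Cramer's rule in its correct form, replacing a column rather than a row. With $y=R(r,:)^T$, you have $(M^{-1}y(S))_\ell=\det M^{(\ell)}/\det M$, where $M^{(\ell)}$ is $V(S,:)$ with column $\ell$ replaced by $y(S)$. Cauchy--Binet then gives $\sum_{|S|=k}\det(M^{(\ell)})^2=\det(N_\ell^TN_\ell)$, where $N_\ell$ is $V$ with column $\ell$ replaced by $y$. Since $V^TV=I$ and $V^Ty=0$ (because $RV=0$), $N_\ell^TN_\ell=I_k+(\|y\|_2^2-1)e_\ell e_\ell^T$, whose determinant is $\|y\|_2^2$. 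Summing over $\ell$ and over the rows of $R$ gives $\mathbb{E}\|R(:,S)V(S,:)^{-T}\|_F^2=k\|R\|_F^2$ exactly, and hence the theorem.

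Alternatively, do as the paper's appendix does for the analogous term $W_1$. Invoke the volume-sampling least-squares bound, Lemma~\ref{lma:dpp_lsq}, with $X=V$ and $y=R(r,:)^T$, and use $\|V\widehat\beta-y\|_2^2=\|y\|_2^2+\|\widehat\beta\|_2^2$.
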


\begin{algorithm}[H]
\caption{Adaptive Randomized Pivoting for CSSP (ARP)}
\label{alg:ARPnew}
\begin{algorithmic}[1]
\Require Matrix $V \in \mathbb{R}^{n \times k}$ with orthonormal columns
\Ensure Indices $S=\{s_1, \ldots, s_k\}$, Matrix $V^{(k)}\in \mathbb{R}^{n\times k}$
\Function{$[V^{(k)},S] =$ ARP-rank}{$V$}
    \State Initialize $S = \emptyset$ and $V^{(0)} = V$
    \For{$i = 1,\ldots,k$}
        \State $p_j = \|V^{(i-1)}(j,i:k)\|_2^2/(k-i+1)$ for $j=1,\ldots,n$
        \State Sample index $s_i$ according to probabilities $p_j$
        \State $S \gets (S ,s_i)$
        \State $V^{(i)} \gets V^{(i-1)} H_i$
        \Comment{$H_i$ zeros out $V^{(i-1)}(s_i,i+1:k)$}
    \EndFor
    \State \Return $V^{(k)},S$
\EndFunction
\end{algorithmic}
\end{algorithm}
    
As observed in Remark~2.2 of~\cite{cortinovis2025arp}, Algorithm \ref{alg:ARPnew} is equivalent to a projection DPP. By the equivalence between volume sampling and fixed-size DPP mentioned above, ARP is also equivalent to $\mathrm{VS}_k(V^T)$, as shown in \cite{epperly2026adaptiverandomizedpivotingvolume}. We provide some more insight here. We first introduce a lemma. 

\begin{lemma}
Let $A \in \mathbb{R}^{m\times n}$ with $\operatorname{rank}(A)=k$ and $V \in \mathbb{R}^{n\times k}$ be the right singular vectors of $A$.
Then
\[
\mathrm{VS}_k(A) = k\text{-}\mathrm{DPP}_L(A^TA) = \mathrm{DPP}(VV^T).
\]

\end{lemma}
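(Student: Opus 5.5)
The first equality, $\mathrm{VS}_k(A) = k\text{-}\mathrm{DPP}_L(A^TA)$, was already noted above: both distributions put mass proportional to $\det(A^TA(S,S))$ on subsets with $|S|=k$. So the real work is the second equality. I will show that $S\sim \mathrm{DPP}(VV^T)$ has exactly the probability mass function of $k$-volume sampling.

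\emph{Step 1: factor the Gram matrix.} Since $\operatorname{rank}(A)=k$, the thin SVD is $A = U\Sigma V^T$ with $U\in\R^{m\times k}$ orthonormal, $\Sigma=\mathrm{diag}(\sigma_1,\dots,\sigma_k)$ invertible, and $V\in\R^{n\times k}$ orthonormal. Then $A^TA = V\Sigma^2V^T$. For any $S$ with $|S|=k$ this gives $A^TA(S,S) = V(S,:)\,\Sigma^2\,V(S,:)^T$ with $V(S,:)$ square, so
\[
\det\bigl(A^TA(S,S)\bigr) = \det(\Sigma)^2 \det\bigl(V(S,:)\bigr)^2 = \det(\Sigma)^2\det\bigl(VV^T(S,S)\bigr).
\]
Hence $\mathrm{VS}_k(A)$ assigns $\mathbb{P}(S)\propto \det(VV^T(S,S))$ on $k$-subsets, because the constant $\det(\Sigma)^2>0$ drops out. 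By Cauchy--Binet, $\sum_{|S|=k}\det(V(S,:))^2=\det(V^TV)=1$, so the normalized probability is exactly $\mathbb{P}(S)=\det(V(S,:))^2$.

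\emph{Step 2: compute the pmf of the projection DPP.} Let $S\sim\mathrm{DPP}(VV^T)$. Proposition~\ref{prop:size_of_s} applies because all eigenvalues of $VV^T$ equal $1$ and its rank is $k$; therefore $|S|=k$ almost surely. For a fixed $T$ with $|T|=k$, the events $\{T\subseteq S\}$ and $\{S=T\}$ coincide up to a null set, since $S$ has exactly $k$ elements. So
\[
\mathbb{P}(S=T)=\mathbb{P}(T\subseteq S)=\det\bigl(VV^T(T,T)\bigr)=\det(V(T,:))^2,
\]
which matches Step 1 and proves the lemma.

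\emph{Main subtlety.} The only step needing care is the use of Proposition~\ref{prop:size_of_s} to pin the cardinality of $S$ at $k$; everything else is a routine determinant identity. If I wanted to avoid relying on the proposition, I would argue directly. First, $\mathbb{E}|S|=\sum_j (VV^T)_{jj}=\operatorname{tr}(VV^T)=k$. Second, $\mathbb{P}(T\subseteq S)=0$ whenever $|T|>k$, because $VV^T(T,T)$ has rank at most $k$. Hence $|S|\le k$ almost surely, and combined with $\mathbb{E}|S|=k$ this forces $|S|=k$ almost surely.
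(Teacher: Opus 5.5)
Your proof is correct and takes essentially the same route as the paper: you factor $A^TA = V\Sigma^2V^T$ so that the volume-sampling weights become $\det(\Sigma)^2\det\bigl(VV^T(S,S)\bigr)$, and you use Proposition~\ref{prop:size_of_s} to force $|S|=k$ for the projection DPP. You also make explicit two points the paper leaves implicit, namely $\mathbb{P}(S=T)=\mathbb{P}(T\subseteq S)$ once $|S|=k$ almost surely, and the Cauchy--Binet normalization; together these tighten the paper's final ``coincides with'' step.
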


\begin{proof}
By Proposition~\ref{prop:size_of_s}, if $S\sim \mathrm{DPP}(VV^T)$, then $|S|=k$. 
Thus both $\mathrm{DPP}(VV^T)$ and $\mathrm{VS}_k(A)$ generate subsets 
$S\subset \{1,\dots,n\}$ with $|S|=k$.

Since $V$ contains the right singular vectors of $A$, we have the eigendecomposition 
\[
A^TA = V\Lambda V^T,
\]
where $\Lambda\in\mathbb{R}^{k\times k}$ is diagonal with positive entries.
Thus, for volume sampling, a column subset indexed by $S$ is selected with probability:
\be
\mathbb{P}(S) \propto \det( V(S,:)\Lambda V(S,:)^T) = \det( (VV^T)(S,S)\Lambda ).
\ee
Since $(VV^T)(S,S)$ and $\Lambda$ are $k\times k$ square matrices, we have
\be
\det( (VV^T)(S,S)\Lambda ) = \det( (VV^T)(S,S) )\det(\Lambda).
\ee
This implies $\mathbb{P}(S)\propto \det( (VV^T)(S,S) )$, which coincides with the probability distribution defined by $DPP(VV^T)$. 
\end{proof}

Therefore, when $A$ has rank $k$, $k$-volume sampling is equivalent to a projection DPP. In general, sampling from $\mathrm{VS}_k(A)$ can be computationally expensive, 
whereas sampling from a projection $\mathrm{DPP}(QQ^T)$ is considerably easier. The ARP algorithm implicitly performs volume sampling on a compressed matrix $B$ satisfying $\mathrm{rank}(B) = k$ and takes as input $V$, the right singular vectors of $B$. It exploits the equivalence between volume sampling and projection DPP to realize volume sampling on $B$ via the projection $\mathrm{DPP}(VV^T)$. If we use $V_{opt}$ as the input to ARP, the corresponding $B$ is $\llbracket A \rrbracket_k$, the approximation obtained from k-truncated SVD of $A$. On the other hand, if we  obtain such a $V$ from the sketch of $A^T$:
\[
V = \mathrm{orth}(A^T\Omega),
\]
the corresponding $B$ is $\Omega^T A$. 

\subsection{Connection between DPP and RPCholesky}
\label{sec:rpc_dpp_connection}

Before introducing the Randomly Pivoted Cholesky (RPCholesky) algorithm, we recall that selecting representative column indices for the CSSP of $A\in \R^{m\times n}$ is equivalent to selecting pivots for the Cholesky factorization of the Gram matrix $K = A^TA\in \Rnn$ (see, e.g., \cite{Higham1990Cholesky,epperly2024gram} for details). This establishes a connection between the pivoted QR and the pivoted Cholesky strategies. Given some indices $S$ such that $K(S,S)$ is invertible, we have 
\be
\|A - A(:,S)A(:,S)^{\dagger} A\|_F^2 = \mathrm{tr}(K - K(:,S)K(S,S)^{-1}K(S,:)).
\ee 

RPCholesky is studied in \cite{RPCholesky}. It operates by iteratively sampling columns proportional to the diagonal entries of the residual matrix. We present it in Algorithm \ref{alg:rpcholesky}. Although RPCholesky is mainly designed for approximating symmetric positive semi-definite matrices, it can be reduced to randomly pivoted QR in CSSP by the Gram correspondence introduced above.

In the following, we adopt the notations of \cite{RPCholesky} and let $K = A^TA$ to compare performing RPCholesky on $K$ (with given $k$) and $k\text{-}\mathrm{DPP}_L(K)$.

\begin{algorithm}[H]
\caption{RPCholesky}
\label{alg:rpcholesky}
\begin{algorithmic}[1]

\Require PSD matrix $K \in \mathbb{R}^{n \times n}$; target rank $k$
\Ensure Indices $S = (s_1, \ldots, s_k)$; matrix $F \in \mathbb{R}^{n \times k}$ such that $\hat{K} = FF^T$
\Function{$[F,S]$=RPCholesky}{$K,k$}
\State Initialize $F \gets 0$ and $d \gets \mathrm{diag}(K)$
\State $S \gets \emptyset$

\For{$i = 1,\ldots,k$}
    \State Sample index $s_i$ with probability $d_i / \sum_{j=1}^n d_j$
    \State $S \gets (S, s_i)$
    
    \State $g \gets K(:, s_i)$ \label{line:compute_g1}
    \State $g \gets g - F(:,1:i-1)\, F(s_i,1:i-1)^T$ \label{line:compute_g2}
    
    \State $F(:,i) \gets g / \sqrt{g(s_i)}$ \label{line:compute_F}
    
    \State $d \gets d - F(:,i)^2$ \label{line:update_d}
\EndFor

\State \Return $F, S$
\EndFunction
\end{algorithmic}
\end{algorithm}

At step $i$, suppose that we have sampled the partial sequence $S_{i-1} = (s_1,\ s_2,\cdots,\ s_{i-1})$. The corresponding Schur complement is given by
\be
K^{(i-1)} = K - K(:,S_{i-1})K(S_{i-1},S_{i-1})^{-1}K(S_{i-1},:), 
\ee
and RPCholesky selects the next pivot $s_i$ with the conditional probability 
\be
\mathbb{P}\{s_i = j \mid s_1,\ s_2,\cdots,\ s_{i-1}\} = \frac{K^{(i-1)}(j,j)}{\text{tr}(K^{(i-1)})}.
\ee
By taking the product of these conditional steps, we can formalize the joint probability of RPCholesky selecting a specific ordered sequence of $k$ pivots, denoted by the tuple $\sigma = (s_1, \dots, s_k)$:
\begin{equation}
\mathbb{P}_{\text{RPC}}(\sigma) = \prod_{i=1}^k \frac{K^{(i-1)}(s_i, s_i)}{\text{tr}(K^{(i-1)})}.
\end{equation} 
By the standard properties of the Cholesky decomposition, the product of the recursive diagonal pivots forms the determinant of the principal submatrix, meaning $\prod_{i=1}^k K^{(i-1)}(s_i, s_i) = \det(K(S,S))$, where $S = \{s_1, \dots, s_k\}$ is the unordered set of indices containing the elements of $\sigma$. Thus, the exact joint probability of the sequence becomes:
\begin{equation}
\label{eq:rpc_proba}
\mathbb{P}_{\text{RPC}}(\sigma) = \frac{\det(K(S,S))}{\prod_{i=1}^k \text{tr}(K^{(i-1)})} \eqqcolon \frac{\det(K(S,S))}{D_K(\sigma)}.
\end{equation}
In the general case when $K$ is an arbitrary positive semi-definite matrix, the denominator $D_K(\sigma)$ is path-dependent and varies across different permutations. This joint probability formula reveals that RPCholesky is a ``trace-penalized'' DPP. Compared with $k\text{-}\mathrm{DPP}_L(K)$, RPCholesky not only selects the indices subset $S$ with large volume $\det(K(S,S))$, but also forces the algorithm to favor sequences of pivots that rapidly decrease the trace of the residual matrix early in the process. Consequently, RPCholesky can be regarded as a variant of fixed-size DPP that uses a  greedy aproach to decrease the trace of the current residual matrix at each iteration.
We then derive the probability of RPCholesky selecting the index subset $S$: 
\begin{equation}
\label{eq:rpc_subset}
\mathbb{P}_{\text{RPC}}(S) = \sum_{\sigma \in \text{Perm}(S)} \mathbb{P}_{\text{RPC}}(\sigma) = \sum_{\sigma \in \text{Perm}(S)} \frac{\det(K(S,S))}{D_K(\sigma)}.
\end{equation}
In a special case where $K = VV^T$ and $V \in \mathbb{R}^{n \times k}$ is an orthonormal matrix ($V^T V = I_k$), since $K$ is an orthogonal projection matrix of rank $k$, its initial trace is $\text{tr}(K) = k$. Because projecting out any active column reduces the rank—and consequently the remaining trace—by exactly one regardless of which pivot is chosen, the sequence of residual traces follows a deterministic decay: $k, k-1, \dots, 1$. Therefore, for every possible permutation $\sigma \in \text{Perm}(S)$, the denominator collapses to a fixed constant:
\be
D_K(\sigma) = \prod_{i=1}^k (k - i + 1) = k!.
\ee
Substituting this constant denominator back into the sequence probability yields $\mathbb{P}_{\text{RPC}}(\sigma) = \det(K(S,S))/k!$ for every valid path. When we sum over all $k!$ permutations to evaluate the unordered subset probability \eqref{eq:rpc_subset}, the permutation count directly cancels out the denominator:
\be
\mathbb{P}_{\text{RPC}}(S) = \sum_{\sigma \in \text{Perm}(S)} \frac{\det(K(S,S))}{k!} = k! \cdot \frac{\det(K(S,S))}{k!} = \det(K(S,S)).
\ee

\subsection{Other algorithms}
Apart from ARP, volume sampling and RPCholesky, several
classical column selection algorithms can also be interpreted through DPP or the concept of volume:

\textbf{Strong rank-revealing QR}
We briefly recall the goal of strong rank-revealing QR\cite{srrqr} (strong RRQR). For given $k$, it aims to maximize $\emph{Vol}(A(:,S))$ by repeatedly interchanging two columns. In each iteration, a column in $S$ and a column outside $S$ are interchanged so that $\emph{Vol}(A(:,S))$ increases by at least $f$, where $f \geq 1$ is a parameter. Strong RRQR does not guaranty global optimality, but it ensures a locally near-maximal volume in the sense that no single column swap can increase the volume by more than 
$f$.
    \be
    \max\limits_{i\in S,j\notin S} \frac{\emph{Vol}(A(:,S\backslash\{i\}\cup \{j\}))}{\emph{Vol}(A(:,S))} \leq f.
    \ee

\begin{figure}[H]
\centering
\begin{tikzpicture}[scale=1.15]

\draw[->,thick] (-0.5,0) -- (7,0) node[right] {candidate subset $S$};
\draw[->,thick] (0,-0.2) -- (0,3.6) node[above] {$g(S)=\emph{Vol}(A(:,S))$};

\draw[line width=1.1pt,smooth,domain=0:6,samples=300]
plot(\x,{0.9*exp(-(\x-1.6)^2) + 3*exp(-(\x-4.6)^2)});

\fill[red] (1.6,0.95) circle (2.4pt);
\fill[red] (4.6,3.0) circle (2.4pt);

\draw[dashed] (1.6,0) -- (1.6,0.9);
\draw[dashed] (4.6,0) -- (4.6,2.9);

\node[below] at (1.6,0) {\small local maximizer};
\node[below] at (4.6,0) {\small global maximizer};

\node[red!80!black] at (1.7,2.85) {\small strong RRQR may select};

\draw[->,red!80!black,thin] (1.60,2.65) -- (1.60,1.3);
\draw[->,red!80!black,thin] (3.5,2.9) -- (4.4,3);

\draw[blue!70!black,thick]
(4.6,2.5) ellipse (1.2 and 0.65);

\node[blue!70!black] at (7.8,2.5)
{\small volume sampling selects w.h.p};

\end{tikzpicture}

\caption{
Optimization landscape of $g(S)=\emph{Vol}(A(:,S))$.
Strong RRQR may select subsets near either a local or global maximizer,
whereas volume sampling places higher probability mass
near the global maximizer.
}
\label{fig:rrqr_and_volume}
\end{figure}

Note that fixed-size DPP sampling (or volume sampling) tends to draw a column subset with large volume with high probability, which is similar to strong RRQR. Although CSSP is inherently a discrete combinatorial optimization problem, it is helpful to view
$f(S)\coloneqq \emph{Vol}(A(:,S))$ as defining an optimization landscape. From this point of view, strong RRQR behaves like a local search method or hill-climbing method, whereas volume sampling assigns higher probability mass to regions near the global maximizer. Using the landmark of $f(S)$, we illustrate the links and difference between strong RRQR and volume sampling in Figure \ref{fig:rrqr_and_volume}.

\textbf{Column-Pivoted QR (CPQR)}
CPQR is a deterministic and iterative algorithm that selects, at each step,
the column with the largest norm and then updates the remaining columns using Gram Schmidt or a Householder reflection. Let $S_i$ be the indices selected during the first $i$ iterations.  Suppose that the selected column at iteration $i+1$ corresponds to the $j$th vector of $R_{22}$. A partial pivoted QR decomposition can be written as,
\be
A\Pi = QR = Q\begin{bmatrix}
    R_{11} & R_{12}\\
       & R_{22}
\end{bmatrix},
\ee
where $R_{11}\in \R^{i\times i}$ is upper-triangular and $R_{12}\in \R^{i\times (n-i)}$ and $R_{22}\in \R^{(m-i)\times (n - i)}$ are general matrices. We have 
\be
\frac{\emph{Vol}(A(:,S_{i+1}))}{\emph{Vol}(A(:,S_i))} = \|R_{22}(:,j)\|_2.
\ee
CPQR selects $\argmax \limits_j \|R_{22}(:,j)\|_2$, thus CPQR can be interpreted as a greedy algorithm for maximizing $\emph{Vol}(A(:,S))$. 

\textbf{Leverage score sampling:}
Leverage score sampling can be viewed as marginal approximation of a projection DPP.
If $K = VV^T$ is a rank-$k$ orthogonal projector and
$S \sim \mathrm{DPP}(K)$, then
the marginal probability \[
\mathbb{P}(i \in S) = K_{ii} = \|V(i,:)\|_2^2,
\]
coincides with the statistical leverage score.

\textbf{The algorithm of Osinsky:}
The algorithm of Osinsky in \cite{OSINSKY2025359} can be interpreted as a derandomized version of Algorithm~\ref{alg:ARPnew}
(projection DPP sampling). Since Theorem~\ref{thm:arp} says that
\[
\mathbb{E}\left [\|A - A(:,S) V(S,:)^{-T} V^T\|_F^2\right ] = (k+1) \|A - AVV^T\|_F^2,
\]
there exists a selection $S^*$ such that $\|A - A(:,S) V(S,:)^{-T} V^T\|_F^2 \leq (k+1) \|A - AVV^T\|_F^2$. The deterministic algorithm minimizes a conditional expectation at each iteration to obtain such a selection $S^*$.

The algorithms discussed above are closely related to the determinant, or equivalently the volume, of the selected columns. We conclude this section by briefly discussing two greedy methods based on different selection criteria. These methods do not have a direct DPP interpretation, but they provide useful comparisons with determinant-based column selection.
In \cite{fornace2024columnrowsubsetselection}, Fornace and Lindsey proposed the nuclear maximization method for Nystr\"{o}m approximation. In the context of the CSSP of a matrix $A$, if the corresponding partial pivoted QR factorization is written as
\be
\label{eq:pivoted_qr_partition}
A\Pi = \begin{bmatrix} Q_1 & Q_2 \end{bmatrix} \begin{bmatrix} R_{11} & R_{12} \\ 0 & R_{22} \end{bmatrix},
\ee
by Gram correspondence, their method is equivalent to greedily maximizing 
\[
\left\|Q_1^TA\right\|_F^2 = \|\begin{bmatrix}
    R_{11} & R_{12}
\end{bmatrix}\|_F^2.
\]
At each iteration, selecting a new pivot and applying Householder reflection generates a new row in the $R$-facor, the last row of $\begin{bmatrix}
    R_{11} & R_{12}
\end{bmatrix}$. Nuclear maximization chooses the pivot for which the squared norm of this new row is largest.
In \cite{Stewart1990Incremental}, Stewart proposed a method to select columns by controlling the condition number of the leading triangular submatrix in a pivoted QR factorization. If the corresponding partial pivoted QR factorization is written as in \eqref{eq:pivoted_qr_partition} and $R_{11}$ is $k\times k$, the method greedily chooses the next pivot so as to control the growth of $\left\|R_{11}^{-1}\right\|_F^2$ and determine the new pivot by the fomula:
\be
j = \argmin_{1\leq j\leq n-k} \frac{\|R_{11}^{-1}R_{12}(:,j) \|_2^2 + 1}{\|R_{22}(:,j)\|_2^2}
\ee
For recent developments on Stewart's pivoting strategy, including its strong rank-revealing properties for matrices with orthonormal rows and a randomized acceleration, we refer the reader to \cite{damle2026}.

\section{Conditional DPP for CSSP}
\label{sec:conditional_dpp}
In this section, we introduce conditional determinantal point processes (conditional DPPs), defined as random index subsets conditioned on the presence of another subset. We show how conditional DPPs can be employed within several CSSP algorithms to extend an existing column selection while preserving the columns already chosen. This setting arises naturally when certain columns must be selected in advance because of prior knowledge or application constraints, such as mandatory features in machine learning, already deployed sensors in sensor placement, or labeled samples in active learning. In particular, we combine conditional DPP and ARP and obtain a new algorithm, called multi-stage ARP. We prove that multi-stage ARP inherits the approximation guarantees of ARP. The resulting analysis reveals a connection between global DPP-based sampling strategies and some greedy algorithms.

\subsection{Conditional DPP for CSSP}
Many column subset selection algorithms such as uniform sampling, volume sampling, ARP sampling and strong rank-revealing QR are designed as one-shot procedures:
after selecting an index subset $S$, they do not naturally support augmenting the set of
chosen columns while preserving the previous selection. Conditional DPPs enable these algorithms to be extended incrementally while preserving previously selected columns. Suppose that a subset $S$ has already been selected by some strategy. Let $K\in \Rnn$ be a DPP kernel, one can sample additional indices by:
\be
U\sim \mathrm{DPP}(K)\mid S\subset U.
\ee
Such a DPP with a constraint that the random index subset $U$ must contain some elements is called a conditional DPP. A classical result in DPP theory \cite{kulesza2012dpp} states that a conditional DPP is equivalent to another DPP whose kernel is given by the Schur complement:
\be
\widetilde K = K - K(:,S)K(S,S)^{-1}K(S,:).
\ee
One can sample $T\sim \mathrm{DPP}(\widetilde K)$ and then let $U = S\cup T$.

Conditional DPP sampling can be used as a post-processing step after an initial subset $S$ has been selected by any CSSP method, such as uniform sampling, CPQR, or leverage-score sampling. It selects additional indices from the DPP conditioned on retaining all indices of $S$. In this work, we combine conditional DPP and ARP. Since ARP is equivalent to sampling from a projection DPP, the kernel needed by conditional DPP can be build from that of projection DPP. We refer to the proposed algorithm as Conditional Adaptive Randomized Pivoting(CARP), which is presented in Algorithm~\ref{alg:CARP}.

\begin{algorithm}[H]
\caption{Conditional Adaptive Randomized Pivoting (CARP)}
\label{alg:CARP}
\begin{algorithmic}[1]

\Require Initial indices $S$, matrix $V_1^{(k)} \in \mathbb{R}^{n \times k}$ (output of $\textsc{ARP-rank}(V_1)$), and matrix $V_2 \in \mathbb{R}^{n \times p}$ such that $V = [V_1,\ V_2]$ has orthonormal columns
\Ensure Indices $T = (t_1, \ldots, t_p)$
\Function{$[T,V]$=CARP}{$S,V_1^{(k)},V_2$}
\State $V \gets [V_1^{(k)} \;\; V_2] \in \mathbb{R}^{n \times (k+p)}$
\Comment{Pre-processing: orthogonalization}
\For{each $s \in S$}
    \State Apply Givens rotations to $V$ to zero out $V(s, k+1:k+p)$
\EndFor
\State $T \gets \emptyset$

\For{$i = 1, \ldots, p$}
    \State $p_j \gets \dfrac{\|V(j, k+i:k+p)\|_2^2}{p-i+1}$ for $j = 1, \ldots, n$
    \State Sample $t_i$ according to probabilities $p_j$
    \Comment{Conditional sampling}
    \State $T \gets (T, t_i)$
    
    \State $V \gets V H_i$
\EndFor

\State \Return $T,V$
\EndFunction
\end{algorithmic}
\end{algorithm}

Suppose that we have performed $\textsc{ARP-rank}(V_1)$ with 
\be
V_1\in \R^{n\times k} \text{ and } V_1^TV_1 = I,
\ee
whose output is an index subset $S$ and a matrix $V_1^{(k)}$. CARP relies on a given orthogonal matrix $V_2\in \R^{n\times p}$ to select $p$ indices. Let $V = [V_1, V_2]$, $V$ satisfy
\be
V^TV = I \text{ and } \|A - AVV^T\|_F \le \|A - AV_1V_1^T\|_F,
\ee
which means $V = [V_1,\ V_2]$ can better approximate the row space of the matrix $A$ than $V_1$. Such a matrix $V_2$ can be constructed from a sketch of the residual 
$\left(A - AV_1V_1^T\right)^T$. For instance, we generate a Gaussian 
matrix $\Omega \in \mathbb{R}^{m \times p}$ and compute
\[
B = A^T \Omega.
\]

We then orthogonalize $B$ against $V_1$ to obtain $V_2$, which is 
\[
V_2=\operatorname{orth}\left((I - V_1V_1^T)B\right).
\]
This ensures that $V_2$ lies in the orthogonal complement of $V_1$ 
while capturing information from the residual.

\def\cell{0.55}
\def\nrows{8}
\def\ncols{5}   

\newcommand{\smallgrid}{
  \foreach \i in {0,...,7}{
    \foreach \j in {0,1,2}{
      \draw[gray] (\j*\cell,-\i*\cell) rectangle ++(\cell,-\cell);
    }
  }
}
\newcommand{\biggrid}{
  \foreach \i in {0,...,7}{
    \foreach \j in {0,...,4}{
      \draw[gray] (\j*\cell,-\i*\cell) rectangle ++(\cell,-\cell);
    }
  }
}
\newcommand{\shadecols}[1]{
  \foreach \j in {0,...,#1}{
    \foreach \i in {\j,...,7}{
      \fill[pattern=north east lines]
      (\j*\cell,-\i*\cell) rectangle ++(\cell,-\cell);
    }
  }
}
\newcommand{\shadecolsall}[1]{
  \foreach \j in {#1}{
    \foreach \i in {0,...,7}{
      \fill[pattern=north east lines]
      (\j*\cell,-\i*\cell) rectangle ++(\cell,-\cell);
    }
  }
}
\newcommand{\shadecolspart}[1]{
  \foreach \j in {#1}{
    \foreach \i in {2,...,7}{
      \fill[pattern=north east lines]
      (\j*\cell,-\i*\cell) rectangle ++(\cell,-\cell);
    }
  }
}
\newcommand{\highlightrow}[3]{%
  \draw[#3, very thick] 
    ({#2*\cell},{-#1*\cell}) 
    rectangle 
    ({5*\cell},{-(#1+1)*\cell});%
}

\begin{figure}[H]
    \begin{center}
        \begin{tikzpicture}[scale=.35]
            \draw (-5,-6) grid (-2,4);
            \node at (-3.5,5) {\small{$V_1^{(k)}$}};
            \node at (-6,-1.5) {\small \textcolor{Blue}{$s_1$}};
            \node at (-6,2.5) {\small \textcolor{BrickRed}{$s_2$}};
            \node at (-6,-3.5) {\small \textcolor{ForestGreen}{$s_3$}};
            \draw[ultra thick, Blue] (-5,-1) rectangle (-2,-2);
            \draw[ultra thick, BrickRed](-5,2) rectangle (-2,3);
            \draw[ultra thick, ForestGreen](-5,-3) rectangle (-2,-4);
            \node[Blue] at (-4.5,-1.5) {$\star$};
            \node[Blue] at (-3.5,-1.5) {\scriptsize $0$};
            \node[Blue] at (-2.5,-1.5) {\scriptsize $0$};
            \node[BrickRed] at (-4.5,2.5){$\star$};
            \node[BrickRed] at (-3.5,2.5){$\star$};
            \node[BrickRed] at (-2.5,2.5){\scriptsize $0$};
            \node[ForestGreen] at (-4.5,-3.5){$\star$};
            \node[ForestGreen] at (-3.5,-3.5){$\star$};
            \node[ForestGreen] at (-2.5,-3.5){$\star$};

            \node at (9,5) {\small{$V = [V_1^{(k)}\mid V_2]$}};
            \draw[->] (-1,0) -- (5,0);
            \node at (2,1) {add $V_2$};
            \draw (7,-6) grid (12,4);
            \node at (6,-1.5) {\small \textcolor{Blue}{$s_1$}};
            \node at (6,2.5) {\small \textcolor{BrickRed}{$s_2$}};
            \node at (6,-3.5) {\small \textcolor{ForestGreen}{$s_3$}};
            \draw[ultra thick, Blue] (7,-1) rectangle (12,-2);
            \draw[ultra thick, BrickRed](7,2) rectangle (12,3);
            \draw[ultra thick, ForestGreen](7,-3) rectangle (12,-4);
            \node[Blue] at (7.5,-1.5) {$\star$};
            \node[Blue] at (8.5,-1.5) {\scriptsize $0$};
            \node[Blue] at (9.5,-1.5) {\scriptsize $0$};
            \node[Blue] at (10.5,-1.5) {$\star$};
            \node[Blue] at (11.5,-1.5) {$\star$};
            \node[BrickRed] at (7.5,2.5){$\star$};
            \node[BrickRed] at (8.5,2.5){$\star$};
            \node[BrickRed] at (9.5,2.5){\scriptsize $0$};
            \node[BrickRed] at (10.5,2.5){$\star$};
            \node[BrickRed] at (11.5,2.5){$\star$};
            \node[ForestGreen] at (7.5,-3.5){$\star$};
            \node[ForestGreen] at (8.5,-3.5){$\star$};
            \node[ForestGreen] at (9.5,-3.5){$\star$};
            \node[ForestGreen] at (10.5,-3.5){$\star$};
            \node[ForestGreen] at (11.5,-3.5){$\star$};
            
            \node at (-2.5,-8) {\small{$\widehat{V}$}};
            \draw[->] (13,0) -- (19,0);
            \node at (16,1) {update $V$}; 
            \draw (-5,-19) grid (0,-9);
            \node at (-6,-14.5) {\small \textcolor{Blue}{$s_1$}};
            \node at (-6,-10.5) {\small \textcolor{BrickRed}{$s_2$}};
            \node at (-6,-16.5) {\small \textcolor{ForestGreen}{$s_3$}};
            \draw[ultra thick, Blue] (-5,-15) rectangle (0,-14);
            \draw[ultra thick, BrickRed](-5,-10) rectangle (0,-11);
            \draw[ultra thick, ForestGreen](-5,-16) rectangle (0,-17);
            \node[Blue] at (-4.5,-14.5) {$\star$};
            \node[Blue] at (-3.5,-14.5) {\scriptsize $0$};
            \node[Blue] at (-2.5,-14.5) {\scriptsize $0$};
            \node[Blue] at (-1.5,-14.5) {\scriptsize $0$};
            \node[Blue] at (-0.5,-14.5) {\scriptsize $0$};
            \node[BrickRed] at (-4.5,-10.5){$\star$};
            \node[BrickRed] at (-3.5,-10.5){$\star$};
            \node[BrickRed] at (-2.5,-10.5){\scriptsize $0$};
            \node[BrickRed] at (-1.5,-10.5){\scriptsize $0$};
            \node[BrickRed] at (-0.5,-10.5){\scriptsize $0$};
            \node[ForestGreen] at (-4.5,-16.5){$\star$};
            \node[ForestGreen] at (-3.5,-16.5){$\star$};
            \node[ForestGreen] at (-2.5,-16.5){$\star$};
            \node[ForestGreen] at (-1.5,-16.5){\scriptsize $0$};
            \node[ForestGreen] at (-0.5,-16.5){\scriptsize $0$};

            \draw[->] (1,-13) -- (7,-13);
            \node at (4,-12) {continue to};
            \node at (4,-14) {perform ARP};
            \draw (9,-19) grid (14, -9);
            \node at (11.5,-8) {\small$\widehat{V}^{(p)}$};
            \node at (8,-14.5) {\small \textcolor{Blue}{$s_1$}};
            \node at (8,-10.5) {\small \textcolor{BrickRed}{$s_2$}};
            \node at (8,-16.5) {\small \textcolor{ForestGreen}{$s_3$}};
            \node at (8,-12.5) {\small \textcolor{Fuchsia}{$t_1$}};
            \node at (8,-18.5) {\small \textcolor{Black}{$t_2$}};
            \draw[ultra thick, Blue] (9,-14) rectangle (14,-15);
            \draw[ultra thick, BrickRed](9,-10) rectangle (14,-11);
            \draw[ultra thick, ForestGreen](9,-16) rectangle (14,-17);
            \draw[ultra thick, Fuchsia](9,-12) rectangle (14,-13);
            \draw[ultra thick, Black](9,-18) rectangle (14,-19);
            \node[Blue] at (9.5,-14.5) {$\star$};
            \node[Blue] at (10.5,-14.5) {\scriptsize $0$};
            \node[Blue] at (11.5,-14.5) {\scriptsize $0$};
            \node[Blue] at (12.5,-14.5) {\scriptsize $0$};
            \node[Blue] at (13.5,-14.5) {\scriptsize $0$};
            \node[BrickRed] at (9.5,-10.5){$\star$};
            \node[BrickRed] at (10.5,-10.5){$\star$};
            \node[BrickRed] at (11.5,-10.5){\scriptsize $0$};
            \node[BrickRed] at (12.5,-10.5){\scriptsize $0$};
            \node[BrickRed] at (13.5,-10.5){\scriptsize $0$};

            \node[ForestGreen] at (9.5,-16.5){$\star$};
            \node[ForestGreen] at (10.5,-16.5){$\star$};
            \node[ForestGreen] at (11.5,-16.5){$\star$};
            \node[ForestGreen] at (12.5,-16.5){\scriptsize $0$};
            \node[ForestGreen] at (13.5,-16.5){\scriptsize $0$};

            \node[Fuchsia] at (9.5,-12.5){$\star$};
            \node[Fuchsia] at (10.5,-12.5){$\star$};
            \node[Fuchsia] at (11.5,-12.5){$\star$};
            \node[Fuchsia] at (12.5,-12.5){$\star$};
            \node[Fuchsia] at (13.5,-12.5){\scriptsize $0$};
            
            \node[Black] at (9.5,-18.5){$\star$};
            \node[Black] at (10.5,-18.5){$\star$};
            \node[Black] at (11.5,-18.5){$\star$};
            \node[Black] at (12.5,-18.5){$\star$};
            \node[Black] at (13.5,-18.5){$\star$};
        \end{tikzpicture}
    \end{center}
    \caption{Illustration of Algorithm~\ref{alg:CARP} applied to $V_1 \in \R^{10\times 3},\, V_2\in \R^{10\times 2}$ .}\label{fig:procedure_carp}
\end{figure}

CARP first performs a pre-processing stage for orthogonalization. It applies some Givens rotations to zero out the nonzero elements of the rows of $V_2$ indexed by $S$. That is,
\be
\widehat{V} = V G\text{ satisfying } \widehat{V}_{i,j} = 0 ,\ \forall i\in S,\,j>k,
\ee
where $G$ is the product of some Givens rotations (or Householder reflectors). The second stage of CARP is conditional sampling. It continues to perform norm sampling on the updated matrix $\widehat{V}$, which is similar to the operations of Algorithm \ref{alg:ARPnew}. We illustrate our algorithm in Figure~\ref{fig:procedure_carp}. In terms of complexity, the pre-processing stage costs $\mathcal{O}(nkp)$ flops and the stage of conditional sampling costs $\mathcal{O}(np^2)$ flops. Thus, the cost of Algorithm \ref{alg:CARP} becomes $\mathcal{O}(nkp + np^2)$  flops.

\begin{algorithm}[H]
\caption{Multi-Stage Adaptive Randomized Pivoting (MSARP)}
\label{alg:MSARP}
\begin{algorithmic}[1]

\Require matrix $V \in \mathbb{R}^{n \times d}$, $\mathbf{k} = [k_1,k_2,\cdots,k_t]$ satisfying $\sum\limits_{i=1}^t k_i = d$, where $k_i$ is the number of columns selected at $i$th stage and $t$ is the total number of stages
\Ensure Indices $S = (s_1,s_2,\cdots,s_d)$
\Function{$[S]$=MSARP}{$V,\mathbf{k}$}
\State $V_{old} \gets V(:,1:k_1)$
\State $n_c = k_1$
\State [$V_{old}, S$] = $\textsc{ARP-rank}(V_{old})$
\For{$i = 1:t-1$}
    \State $V_{new} \gets V(:,n_c+1:n_c+k_{i+1})$
    \State $[T,V_{old}]$ = \textsc{CARP}($S,V_{old},V_{new}$)
    \State $S \gets (S,T)$
    \State $n_c \gets n_c + k_{i+1}$
\EndFor
\State \Return $S$
\EndFunction
\end{algorithmic}
\end{algorithm}

The idea of Algorithm~\ref{alg:CARP} naturally leads to a multi-stage ARP sampling strategy. Combining Algorithm~\ref{alg:ARPnew} and Algorithm~\ref{alg:CARP} leads to an incremental selection of sets of columns algorithm, presented in Algorithm~\ref{alg:MSARP}. In this algorithm, one initially samples $k_1$ columns via ARP, followed by successive rounds of selecting $k_2, k_3, \dots, k_t$ columns through CARP.

\subsection{Analysis}
We now analyze the approximation performance of a conditional DPP. For the case that the initial selection $S$ is obtained from a general CSSP algorithm, we assume that $V(S,:)$ has full row rank. We first show that Algorithm \ref{alg:CARP} is equivalent to a conditional DPP. The Givens rotations in the preprocessing
stage construct an orthogonal matrix $G$ such that
\[
\widehat V = VG = [\,Y,\ Z\,],
\qquad
\widehat V(S,:)=[\,L,\ 0\,],
\]
where $L\in\mathbb{R}^{k\times k}$ is nonsingular and
$Z\in\mathbb{R}^{n\times p}$. Since $G$ is orthogonal,
$K=VV^T=YY^T+ZZ^T$, while
\[
K(:,S)=YL^T,\qquad K(S,S)=LL^T.
\]
Consequently,
\[
\widetilde K = K-K(:,S)K(S,S)^{-1}K(S,:)
 = YY^T+ZZ^T-YL^T(LL^T)^{-1}LY^T
 = ZZ^T.
\]
Since the last $p$ columns of $\widehat V$ form an orthonormal basis, we have shown that $\mathrm{DPP}(\widetilde K)$ is equivalent to projection $\mathrm{DPP}(ZZ^T)$, which is exactly Algorithm~\ref{alg:CARP}. We can bound the conditional expectation  of approximation errors as shown in Theorem~\ref{thm:conditional_dpp_general}.

\begin{theorem}
\label{thm:conditional_dpp_general}
    Let $A\in \Rmn$, $V \in \R^{n\times (k+p)} $ with $V^TV = I$ and $K = VV^T$. Let $S\subset \{1,2,\dots,n\}$ be an index subset with $|S| = k$ and $V(S,:)$ having  full row rank. If we sample $T\sim \mathrm{DPP}(\widetilde K)$ defined above and then let $U = S\cup T$, we have
    \be
    \label{eq:conditional_dpp_general}
    \mathbb{E}_T [ \|A - A(:,U)V(U,:)^{-T}V^T\|_F^2\mid S ] \leq (1 + p) \|\widetilde{A} -  \widetilde{A}(:,S)(V^T(:,S))^{\dagger} V^T\|_F^2,
    \ee
    where $\widetilde A = A - AVV^T$.
\end{theorem}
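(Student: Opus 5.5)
The plan is to reduce the statement to Theorem~\ref{thm:arp}, applied to a modified matrix with the orthonormal basis $Z$. Three ingredients are needed: invariance of the oblique projector under the rotation $G$, a block-triangular inversion, and the observation that $T\sim\mathrm{DPP}(ZZ^T)$ is exactly the output distribution of Algorithm~\ref{alg:ARPnew} run on $Z$.

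\textbf{Step 1: reduction to the residual $\widetilde A$.} Write $E_U$ for the column selector, so that $A(:,U)V(U,:)^{-T}V^T = A\Pi_U$ with $\Pi_U = E_U(V^TE_U)^{-1}V^T$. Replacing $V$ by $\widehat V = VG$ leaves $\Pi_U$ unchanged, since $G$ is orthogonal and cancels. Moreover $V^T\Pi_U = V^T$, hence $AVV^T\Pi_U = AVV^T$. It follows that
\[
A - A\Pi_U = \widetilde A(I-\Pi_U).
\]

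\textbf{Step 2: block inversion.} Order $U$ as $(S,T)$ and write $Y_T = Y(T,:)$ and $Z_T = Z(T,:)$. Then
\[
\widehat V^TE_U = \begin{bmatrix} L^T & Y_T^T\\ 0 & Z_T^T\end{bmatrix},
\]
which is block upper triangular. Its inverse has blocks $L^{-T}$, $-L^{-T}Y_T^TZ_T^{-T}$ and $Z_T^{-T}$. Define
\[
B \coloneqq \widetilde A - \widetilde A(:,S)L^{-T}Y^T .
\]
Expanding $\widetilde A\Pi_U$ with this inverse gives
\[
\widetilde A(I-\Pi_U) = B - B(:,T)\,Z_T^{-T}Z^T,
\]
using $B(:,T) = \widetilde A(:,T) - \widetilde A(:,S)L^{-T}Y_T^T$.

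\textbf{Step 3: identify $B$ with the right-hand side.} We have $V^T(:,S) = G\begin{bmatrix}L^T\\0\end{bmatrix}$, whose pseudoinverse is $[L^{-T}\ 0]\,G^T$. Combined with $V^T = G\widehat V^T$, this gives
\[
(V^T(:,S))^\dagger V^T = L^{-T}Y^T .
\]
Hence $B = \widetilde A - \widetilde A(:,S)(V^T(:,S))^\dagger V^T$, which is exactly the matrix on the right of \eqref{eq:conditional_dpp_general}.

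\textbf{Step 4: apply Theorem~\ref{thm:arp}.} Since $\widetilde A V = 0$, we have $\widetilde A Z = 0$. Also $Y^TZ = 0$, because $\widehat V$ has orthonormal columns. Together these give $BZ = 0$, so $B - BZZ^T = B$. Conditional on $S$, $T\sim\mathrm{DPP}(ZZ^T)$ is the projection DPP realized by ARP on the orthonormal $Z\in\R^{n\times p}$. Theorem~\ref{thm:arp}, applied to $B$ with basis $Z$, therefore yields
\[
\mathbb{E}_T\bigl[\|B - B(:,T)Z_T^{-T}Z^T\|_F^2 \mid S\bigr] \le (p+1)\,\|B-BZZ^T\|_F^2 = (p+1)\,\|B\|_F^2 .
\]
This is the claim.

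The main obstacle is the bookkeeping in Step 2: the block inverse must collapse into a single ARP-type residual of $B$ with respect to $Z$, and the invertibility of $Z_T$ holds almost surely under the DPP. A secondary point is checking that Theorem~\ref{thm:arp} applies verbatim with $(A,V,k)$ replaced by $(B,Z,p)$, i.e.\ that the projection-DPP and ARP laws coincide, as noted in the discussion preceding Algorithm~\ref{alg:ARPnew}.
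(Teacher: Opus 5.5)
Your proof is correct, but it takes a different route from the paper's. In the row-form appendix version, the paper writes the residual as $\widetilde A - QW_U$ with $W_U = Q(U,:)^{-1}\widetilde A(U,:)$. It splits $W_U = W_1 + W_2$, where $W_1 = Q(S,:)^\dagger \widetilde A(S,:)$ is fixed by $S$ and $W_2$ lies in $\mathrm{Null}(Q(S,:))$. Orthogonality then gives $\|\widetilde A\|_F^2 + \|W_1\|_F^2 + \|W_2\|_F^2$. The paper bounds $\mathbb{E}_T\|W_2\|_F^2 \le p\|R\|_F^2$ using the volume-sampling least-squares result (Lemma~\ref{lma:dpp_lsq}) on $\min_M \|QBM - R\|_F$, and recombines via $\|R\|_F^2 = \|\widetilde A\|_F^2 + \|W_1\|_F^2$.

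You instead use the block-triangular structure of $\widehat V^T E_U$ to get a pathwise identity, $A - A\Pi_U = B - B(:,T)Z_T^{-T}Z^T$. Your $B$ is exactly the paper's $R$ (transposed), and your $Z$ is the paper's $QB$, since the null-space basis of $Q(S,:)$ is the last $p$ columns of $G$. The observation $BZ = 0$ then lets you apply Theorem~\ref{thm:arp} as a black box on $(B, Z, p)$.

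Your route shows that the conditional stage is literally ARP run on the deflated matrix $B$ with basis $Z$. Any selection rule on $Z$ with a guarantee relative to $\|B - BZZ^T\|_F$ therefore transfers verbatim, for example a derandomized choice such as Osinsky's, and the $W_1$/$W_2$ bookkeeping disappears. The paper's route is self-contained given the least-squares lemma. It also exposes the $W_1$ term, which it reuses when averaging over $S$ in Theorem~\ref{thm:CARP}. With your approach you would recover that step by noting $\|B\|_F^2 = \|\widetilde A\|_F^2 + \|\widetilde A(:,S)L^{-T}\|_F^2$ (because $\widetilde A Y = 0$ and $Y$ has orthonormal columns), and then bounding the second term in expectation over $S$.
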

We construct an example in Theorem~\ref{thm:conditional-sharpness} to show that the bound is tight in the sense of conditional expectation.

\begin{theorem}
\label{thm:conditional-sharpness}
For any integers $k\geq 1$ and $p\geq 1$, there exist a matrix
$A\in\mathbb{R}^{(k+p+1)\times(k+p+1)}$, an orthonormal matrix
$V\in\mathbb{R}^{(k+p+1)\times(k+p)}$, and a prescribed initial
subset $S$ with $|S|=k$ such that the equality of~\eqref{eq:conditional_dpp_general} in
Theorem~\ref{thm:conditional_dpp_general} can be attained exactly.
\end{theorem}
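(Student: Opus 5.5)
The plan is to give an explicit example in which $\widetilde A(:,S)=0$, so that the right-hand side of \eqref{eq:conditional_dpp_general} reduces to $(1+p)\|\widetilde A\|_F^2$. We then choose the geometry so that every realization of $T$ gives error exactly $(1+p)\|\widetilde A\|_F^2$. The model is the classical sharpness example for the ARP bound in Theorem~\ref{thm:arp}, transplanted to the coordinates outside $S$.

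\emph{Construction.} Let $n=k+p+1$, $S=\{1,\dots,k\}$ and $R=\{k+1,\dots,n\}$, so $|R|=p+1$. Define the unit vector $w\in\R^n$ by $w(S)=0$ and $w(i)=1/\sqrt{p+1}$ for $i\in R$. Take
\[
V=[\,E_S,\ Z\,],
\]
where $E_S=[e_1,\dots,e_k]$ and the columns of $Z\in\R^{n\times p}$ form an orthonormal basis of $\{x:\ x(S)=0,\ x\perp w\}$. Then $V^TV=I$, $VV^T=I-ww^T$, and $V(S,:)=[\,I_k,\ 0\,]$ has full row rank. Finally set $A=I_n$, so that
\[
\widetilde A=I-VV^T=ww^T .
\]

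\emph{Right-hand side.} Since $w(S)=0$, we have $\widetilde A(:,S)=0$. Hence the right-hand side equals $(1+p)\|ww^T\|_F^2=1+p$.

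\emph{Conditional DPP.} Since $V(S,:)=[I_k,0]$ already has the form $[L,0]$, no rotation is needed and $\widetilde K=ZZ^T$. This is a projection DPP of rank $p$, so $|T|=p$ almost surely. Its support is contained in $R$, because $Z(S,:)=0$ and $T$ consists of indices with positive diagonal entries of $\widetilde K$. Therefore $U=\{1,\dots,n\}\setminus\{j\}$ for some $j\in R$. Moreover $\det \widetilde K(T,T)>0$ for every such $T$: the rows of $Z$ indexed by $R$ span $\R^p$ and satisfy the single linear relation $\sum_{i\in R}Z(i,:)=0$ (because $w\perp Z$), so any $p$ of them are independent. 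Thus every such $U$ has positive probability and $V(U,:)$ is invertible.

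\emph{Left-hand side.} For a fixed $U=\{1,\dots,n\}\setminus\{j\}$, let $P=E_U V(U,:)^{-T}V^T$, where $E_U$ collects the columns $e_i$, $i\in U$ (so $A(:,U)=E_U$). Then $P$ is the oblique projector with range $\mathrm{span}\{e_i:i\in U\}$ and kernel $\mathrm{span}(V)^\perp=\mathrm{span}(w)$. Indeed, $V^TP=V(U,:)^TV(U,:)^{-T}V^T=V^T$, so $P$ is idempotent, $Pw=0$, and $\mathrm{range}(P)\subseteq\mathrm{span}\{e_i:i\in U\}$. Consequently $I-P$ is the complementary projector onto $\mathrm{span}(w)$ along $\mathrm{span}\{e_i: i\in U\}$, which gives
\[
I-P=\frac{w\,e_j^T}{w_j},\qquad \|A-A(:,U)V(U,:)^{-T}V^T\|_F^2=\frac{1}{w_j^2}=p+1 .
\]
This value is deterministic, independent of $j$, so the conditional expectation equals $p+1$ and matches the right-hand side exactly.

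The main obstacle is identifying $I-P$ rigorously as the oblique projector $w e_j^T/w_j$, which requires the kernel/range argument above and the invertibility of $V(U,:)$. The remaining steps are direct computations.
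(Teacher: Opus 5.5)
Your proposal is correct and follows essentially the paper's construction: the same $S=\{1,\dots,k\}$ and the same block-diagonal $V$ (your $Z(R,:)$ plays the role of the paper's $Q^\top$ with $Q\mathbf{1}_{p+1}=0$), with $\widetilde A(:,S)=0$ and every admissible $T$ omitting exactly one index of $R$, so the error is the same for every $T$. The differences are cosmetic. You take $A=I_n$, so that $\widetilde A=ww^\top$ and the error is $p+1$, whereas the paper takes $A=\mathrm{diag}(I_k,B)$, which makes $\mathrm{span}(V)$ the unique dominant right singular subspace and gives error $(p+1)/4$; and you identify $I-P=we_j^\top/w_j$ by a range/kernel argument where the paper computes $B-B(:,\widehat T)Q_{\widehat T}^{-1}Q$ entrywise.
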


\begin{proof}
Let $Q\in\mathbb{R}^{p\times(p+1)}$ satisfy
\be
QQ^\top=I_p,
\qquad
Q\mathbf{1}_{p+1}=0,
\ee
and define
\be
B=
\begin{bmatrix}
Q\\[1mm]
\dfrac{1}{2\sqrt{p+1}}\mathbf{1}_{p+1}^\top
\end{bmatrix},
\qquad
A=
\begin{bmatrix}
I_k & 0\\
0 & B
\end{bmatrix},
\qquad
V=
\begin{bmatrix}
I_k & 0\\
0 & Q^\top
\end{bmatrix}.
\ee
Let $S=\{1,\ldots,k\},
\,
K=VV^\top$,
and we sample
$T\sim\operatorname{DPP}(\widetilde K)$ as defined above.

Since $Q\mathbf{1}_{p+1}=0$ and $QQ^\top=I_p$, we have
\be
Q^\top Q
=
I_{p+1}
-
\frac{1}{p+1}
\mathbf{1}_{p+1}\mathbf{1}_{p+1}^\top.
\ee
Moreover,
\[
BB^\top
=
\begin{bmatrix}
I_p & 0\\
0 & \dfrac14
\end{bmatrix}.
\]
Hence the singular values of $A$ are $1$ with multiplicity $k+p$ and $1/2$. Therefore,
\[
\left\|A-A_{k+p}\right\|_F^2=\frac14.
\]
Moreover,
\be
\widetilde A
:=
A-AVV^\top
=
\begin{bmatrix}
0 & 0\\
0 &
\begin{bmatrix}
0\\
\dfrac{1}{2\sqrt{p+1}}\mathbf{1}_{p+1}^\top
\end{bmatrix}
\end{bmatrix},
\ee
so that
\be
\|\widetilde A\|_F^2
=
\left\|
A-A_{k+p}
\right\|_F^2= \frac14.
\ee
Since $S=\{1,\ldots,k\}$, we also have
$\widetilde A(:,S)=0$,
which gives
\be
\left\|
\widetilde A
-
\widetilde A(:,S)
\bigl(V^\top(:,S)\bigr)^\dagger
V^\top
\right\|_F^2
=
\left\|
A-A_{k+p}
\right\|_F^2.
\ee

We can simplify $K$ and $\widetilde K$ as
\be
K=
\begin{bmatrix}
I_k & 0\\
0 & Q^\top Q
\end{bmatrix},
\quad
\widetilde K
=
\begin{bmatrix}
0 & 0\\
0 & Q^\top Q
\end{bmatrix}.
\ee
Thus every choice of $T$ contains $p$ indices from the last
$p+1$ indices. Let $\widehat T\subset\{1,\ldots,p+1\}$ denote the
corresponding local indices and set
$
Q_{\widehat T}=Q(:,\widehat T).
$
Since $\operatorname{Null}(Q)=\operatorname{span}
\{\mathbf{1}_{p+1}\}$, every $p$ columns of $Q$ are linearly
independent, and hence $Q_{\widehat T}$ is nonsingular.

For $U=S\cup T$, the block structure gives
\be
A(:,U)V(U,:)^{-\top}V^\top
=
\begin{bmatrix}
I_k & 0\\
0 &
B(:,\widehat T)Q_{\widehat T}^{-1}Q
\end{bmatrix}.
\ee
Writing
\be
B(:,\widehat T)
=
\begin{bmatrix}
Q_{\widehat T}\\[1mm]
\dfrac{1}{2\sqrt{p+1}}\mathbf{1}_p^\top
\end{bmatrix},
\ee
we obtain
\be
B-
B(:,\widehat T)Q_{\widehat T}^{-1}Q
=
\begin{bmatrix}
0\\[1mm]
\dfrac{1}{2\sqrt{p+1}}
\left(
\mathbf{1}_{p+1}^\top
-
\mathbf{1}_p^\top Q_{\widehat T}^{-1}Q
\right)
\end{bmatrix}.
\ee

Let $j$ be the unique index not contained in $\widehat T$.
Since $Q\mathbf{1}_{p+1}=0$,
\be
Q(:,j)
=
-Q_{\widehat T}\mathbf{1}_p,
\ee
and therefore
\be
Q_{\widehat T}^{-1}Q(:,j)
=
-\mathbf{1}_p.
\ee
For every selected column, $Q_{\widehat T}^{-1}Q(:,i)$ is a
column of the identity matrix. Hence
\be
\mathbf{1}_{p+1}^\top
-
\mathbf{1}_p^\top Q_{\widehat T}^{-1}Q
\ee
vanishes on all indices in $\widehat T$ and equals $p+1$ at the
single omitted index $j$. Consequently,
\be
\begin{aligned}
\left\|
A-A(:,U)V(U,:)^{-\top}V^\top
\right\|_F^2
&=
\frac{1}{4(p+1)}(p+1)^2\\
&=
\frac{p+1}{4}.
\end{aligned}
\ee
This holds every choice of $T$, and therefore
\be
\mathbb{E}_T
\left[
\left\|
A-A(:,U)V(U,:)^{-\top}V^\top
\right\|_F^2
\,\middle|\,S
\right]
=
\frac{p+1}{4}.
\ee
Since $\|A-A_{k+p}\|_F^2=1/4$, we conclude that
\be
\mathbb{E}_T
\left[
\left\|
A-A(:,U)V(U,:)^{-\top}V^\top
\right\|_F^2
\,\middle|\,S
\right]
=
(p+1)\left\|A-A_{k+p}\right\|_F^2.
\ee
\end{proof}

The following results demonstrate that CARP inherits the strong multiplicative error guarantees of ARP\cite{cortinovis2025arp}.  Theorem~\ref{thm:CARP} provides the error guarantees when we do a two-stage sampling process, ARP followed with CARP. Although the bound of Theorem~\ref{thm:CARP} is slightly larger than that of Theorem~\ref{thm:arp}, our experiments show that the multi-stage ARP sampling is comparable to one-shot ARP. We put the proof of Theorem~\ref{thm:conditional_dpp_general} and Theorem~\ref{thm:CARP} in Appendix.

\begin{theorem}
\label{thm:CARP}
    Let $A \in \mathbb{R}^{m \times n}$, $V_1 \in \mathbb{R}^{n \times k}$, $V_2 \in \mathbb{R}^{n \times p}$, 
    $V = [V_1, V_2]$ satisfying $V^TV = I$, and $K = V V^T$.
    We first sample $S \sim \mathrm{DPP}(V_1 V_1^T)$, then sample $T \sim \mathrm{DPP}(\widetilde{K})$,
    where
    \[
    \widetilde{K} = K - K(:,S) K(S,S)^{-1} K(S,:).
\]
Let $U = [S, T]$. Then we have
\[
\mathbb{E}\!\left( \| A - A(:,U)  V(U,:)^{-T} V^T \|_F^2 \right)
\le (1+k)(1+p)\, \| A -  A VV^T \|_F^2.
\]
\end{theorem}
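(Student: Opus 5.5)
Conditioning on $S$ reduces the statement to two earlier results. Theorem~\ref{thm:conditional_dpp_general} controls the second stage and produces a residual term involving $\widetilde A = A - AVV^T$ and the first-stage oblique projector. Theorem~\ref{thm:arp} controls that term in expectation over the first stage. The plan is to show that the intermediate quantity in \eqref{eq:conditional_dpp_general} is exactly an ARP error for a suitable matrix, and then apply the tower property.

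\textbf{Step 1 (second stage).} $S \sim \mathrm{DPP}(V_1V_1^T)$ is a projection DPP, so almost surely $|S| = k$ and $\det(V_1(S,:)) \neq 0$. Hence $V(S,:) = [V_1(S,:),\, V_2(S,:)]$ has full row rank, and Theorem~\ref{thm:conditional_dpp_general} applies conditionally on $S$:
\[
\mathbb{E}\bigl[\|A - A(:,U)V(U,:)^{-T}V^T\|_F^2 \mid S\bigr] \le (1+p)\,\bigl\|\widetilde A - \widetilde A(:,S)\,(V^T(:,S))^{\dagger} V^T\bigr\|_F^2 .
\]

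\textbf{Step 2 (simplify the residual term).} Set $W = V^T(:,S) \in \mathbb{R}^{(k+p)\times k}$, which has full column rank, so $W^\dagger W = I_k$. I would show that the right-hand norm satisfies
\[
\bigl\|\widetilde A - \widetilde A(:,S) W^\dagger V^T\bigr\|_F^2 \le \|\widetilde A\|_F^2 + \|\widetilde A(:,S) W^\dagger\|_F^2 .
\]
This follows because the rows of $\widetilde A$ are orthogonal to the range of $V$, since $\widetilde A V = 0$, while the rows of $\widetilde A(:,S)W^\dagger V^T$ lie in that range. The two terms are therefore orthogonal, and by Pythagoras the inequality holds with equality. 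Since $V$ has orthonormal columns, $\|\widetilde A(:,S)W^\dagger V^T\|_F = \|\widetilde A(:,S)W^\dagger\|_F$.

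\textbf{Step 3 (compare with the ARP error for $V_1$).} Now compare $\|\widetilde A(:,S)W^\dagger\|_F^2$ with the corresponding first-stage quantity $\|\widetilde A(:,S)V_1(S,:)^{-1}\|_F^2$. Write $W = \begin{bmatrix} V_1(S,:)^T \\ V_2(S,:)^T \end{bmatrix}$, so that $W^T W = V_1(S,:)V_1(S,:)^T + V_2(S,:)V_2(S,:)^T \succeq V_1(S,:)V_1(S,:)^T$. Then
\[
\|\widetilde A(:,S)W^\dagger\|_F^2 = \operatorname{tr}\bigl(\widetilde A(:,S)(W^TW)^{-1}\widetilde A(:,S)^T\bigr) \le \operatorname{tr}\bigl(\widetilde A(:,S)(V_1(S,:)V_1(S,:)^T)^{-1}\widetilde A(:,S)^T\bigr) = \|\widetilde A(:,S)V_1(S,:)^{-1}\|_F^2 .
\]
Next apply the same orthogonal-splitting argument to $\widetilde A$ with basis $V_1$; this is valid because $\widetilde A V_1 = 0$. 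It gives
\[
\|\widetilde A\|_F^2 + \|\widetilde A(:,S)V_1(S,:)^{-1}\|_F^2 = \bigl\|\widetilde A - \widetilde A(:,S)V_1(S,:)^{-T}V_1^T\bigr\|_F^2 .
\]
The right-hand side is exactly the ARP error for the matrix $\widetilde A$ with basis $V_1$.

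\textbf{Step 4 (take expectations).} By Theorem~\ref{thm:arp} applied to $\widetilde A$,
\[
\mathbb{E}_S\bigl\|\widetilde A - \widetilde A(:,S)V_1(S,:)^{-T}V_1^T\bigr\|_F^2 \le (k+1)\,\|\widetilde A - \widetilde A V_1V_1^T\|_F^2 = (k+1)\,\|\widetilde A\|_F^2 = (k+1)\,\|A - AVV^T\|_F^2 .
\]
Combining Steps 1--3 and taking the expectation over $S$ with the tower property yields the bound $(1+k)(1+p)\|A - AVV^T\|_F^2$.

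\textbf{Main obstacle.} The delicate part is Step 3: the identification of the intermediate term from Theorem~\ref{thm:conditional_dpp_general} with an ARP error for $\widetilde A$, including the pseudoinverse inequality. If the direct Loewner comparison of $(W^TW)^{-1}$ with $(V_1(S,:)V_1(S,:)^T)^{-1}$ proved awkward, I would fall back on the minimal-norm characterization of $W^\dagger$: among all $X$ with $XW = I_k$, the pseudoinverse minimizes $\|\widetilde A(:,S)X\|_F$. The choice $X = [V_1(S,:)^{-1},\, 0]$ satisfies $XW = I_k$ and yields the same inequality.
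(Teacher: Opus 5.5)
Your proposal is correct and follows essentially the paper's argument: the paper likewise splits off $\|\widetilde A\|_F^2$ orthogonally, takes the factor $1+p$ from the conditional (second-stage) bound, compares the first-stage term via $(V(S,:)V(S,:)^T)^{-1}\preceq (V_1(S,:)V_1(S,:)^T)^{-1}$, and gets the factor $k+1$ from the volume-sampling least-squares bound on $\widetilde A$ with basis $V_1$ (equivalent to your use of ARP's guarantee on $\widetilde A$). One small fix: in Step 3 and in your fallback, the intermediate quantity should be $\|\widetilde A(:,S)V_1(S,:)^{-T}\|_F^2$ and the left inverse should be $X=[V_1(S,:)^{-T},\,0]$, because $\operatorname{tr}(\widetilde A(:,S)(V_1(S,:)V_1(S,:)^T)^{-1}\widetilde A(:,S)^T)=\|\widetilde A(:,S)V_1(S,:)^{-T}\|_F^2$ and $V_1(S,:)^{-1}V_1(S,:)^T\neq I_k$ in general; the endpoints of your chain, and hence the conclusion, are unaffected.
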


The analysis established in Theorem~\ref{thm:CARP} can be generalized to that of Algorithm~\ref{alg:MSARP}. By induction, it can be shown that the multiplicative factor in the expected error bound becomes $\prod_{i=1}^t (1+k_i)$. 

Crucially, in the extreme case where $k_1 = \dots = k_t = 1$, the bound factor simplifies to $2^t$. The power of $2$ shows up in the known worst-case exponential bounds for CPQR\cite{srrqr} as well. ARP (and projection DPPs) functions as a global sampling strategy and CPQR is a purely deterministic, greedy algorithm. By seeking informative columns while keeping previous selections, CARP incorporates a greedy element into a global strategy. Our findings thus provide a theoretical bridge between some greedy and global algorithms, quantifying the growth of the error bound when some global selection is traded for iterative, greedy selection.

\section{Fixed-Precision Adaptive Randomized Pivoting}
\label{sec:arp_tol}
We aim to address a practical limitation of the basic ARP procedure: it requires a prescribed target rank $k$, which may be difficult to determine a priori, when little is known about the spectral properties of the matrix $A$.

To overcome this issue, we combine randomized SVD with fixed precision \cite{svdsketch_fixed_precision} and ARP. In particular, we adopt an adaptive randomized range finder similar in spirit to \cite{halko_finding_2010, svdsketch_fixed_precision}, which incrementally constructs an orthonormal basis $V$ that approximates the row space of $A$. Given a block size $b$, the algorithm generates $b$ new basis vectors at each iteration and continues this process until the residual $\|A - AVV^T\|_F$ falls below a prescribed tolerance.

This leads to a fixed-precision ARP, presented in Algorithm~\ref{alg:ARP_tolerance}. Instead of fixing the rank in advance, the algorithm adaptively determines an effective rank $k$ such that the expectation of the errors is controlled in a given tolerance. We first compute $V$ by capturing the dominant row space of $A$ to the desired precision by applying an incremental randomized rangefinder to $A^T$. In Line~\ref{step:reorth}, we orthogonalize the new block $V_i$ against the existing basis $V$, ensuring that the augmented basis $[V,V_i]$ remains orthonormal. In Line~\ref{step:qb}, we compute $B_i = V_i^TA^T$ and append it as a new block row of $B$ so that $B= V^TA^T$. The loop halts when $\|(I - VV^T)A^T\|_F^2\leq \tau/(k+1)$, where $k$ is the number of columns of $V$. The standard ARP procedure is then applied to $V$ to produce an index subset $S$. For brevity, we omit some implementation details in Algorithm~\ref{alg:ARP_tolerance}. For instance, we can set a max iteration number, and the line~\ref{step:reorth} can contain several reorthogonalization steps. The guarantees of Algorithm~\ref{alg:ARP_tolerance} are formalized in Theorem~\ref{thm:arp_tol}.

\begin{algorithm}[H]
\caption{Adaptive Randomized Pivoting with Fixed Precision}
\label{alg:ARP_tolerance}
\begin{algorithmic}[1]

\Require Matrix $A \in \mathbb{R}^{m \times n}$, block size $b$, tolerance $\tau$
\Ensure Integer $k$, indices $S = (s_1, \ldots, s_k)$
\Function{$[k,S]$ = ARP-tol}{$A,b,\tau$}
\State Initialize $V \gets [\,]$, $B \gets [\,]$, $k \gets 0$, $E \gets \|A\|_F^2$

\While{$E > \tau/(k+1)$}
    \State $k \gets k + b$
    \State Draw a Gaussian matrix $\Omega \in \mathbb{R}^{m \times b}$
    
    \State $V_i \gets \mathrm{orth}\!\left(A^{T}\Omega - V(B\Omega)\right)$
    \State $V_i \gets \mathrm{orth}\!\left(V_i - V(V^T V_i)\right)$ \label{step:reorth}
    
    \State $B_i \gets V_i^T A^T$ \label{step:qb}
    \State $V \gets [V,\, V_i]$, \quad $B \gets \begin{bmatrix} B \\ B_i \end{bmatrix}$
    \State $E \gets E -  \|B_i\|_F^2$
\EndWhile
\State $[\sim,S] = \textsc{ARP-rank}(V)$ \label{step:arp}
\State \Return $k, S$
\EndFunction
\end{algorithmic}
\end{algorithm}

\begin{theorem}
\label{thm:arp_tol}
    Let $A\in\Rmn$ and $\tau > 0$. Then the random
index set $S$ returned by Algorithm \ref{alg:ARP_tolerance} satisfies
\be
\mathbb{E}\|A - A(:,S)V(S,:)^{-T}V^T\|_F^2 \leq \tau
\ee
\end{theorem}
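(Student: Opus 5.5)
The plan is to condition on the basis $V$ produced by the range-finding loop and then apply Theorem~\ref{thm:arp} to that fixed $V$. First I would show that at every pass the variable $E$ equals $\|A - AVV^T\|_F^2 = \|(I-VV^T)A^T\|_F^2$ for the current $V$. Initially $V=[\,]$ and $E=\|A\|_F^2$, so the identity holds. In each iteration, Line~\ref{step:reorth} makes $[V,V_i]$ orthonormal, so $VV^T$ is replaced by $VV^T+V_iV_i^T$ with $V_i^TV=0$. By Pythagoras,
\[
\|A^T\|_F^2-\|[V,V_i]^TA^T\|_F^2=\bigl(\|A^T\|_F^2-\|V^TA^T\|_F^2\bigr)-\|V_i^TA^T\|_F^2 .
\]
Since $B_i=V_i^TA^T$ (Line~\ref{step:qb}), the update $E\gets E-\|B_i\|_F^2$ preserves the invariant. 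This induction relies on exact orthogonality of $[V,V_i]$, which is exactly what Line~\ref{step:reorth} enforces in exact arithmetic.

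Next I would check that the loop terminates. Each iteration adds $b$ orthonormal directions, so $k$ cannot exceed $\mathrm{rank}$-related bounds indefinitely. Once $V$ spans the row space of $A$ (at the latest when $k\ge n$, or almost surely once $k\ge\mathrm{rank}(A)$ for Gaussian $\Omega$), we have $E=0<\tau/(k+1)$. If $\mathrm{orth}$ returns fewer than $b$ columns in degenerate cases, $k$ should be read as the number of columns of $V$. At exit, the invariant and the stopping rule give
\[
\|A-AVV^T\|_F^2\le \frac{\tau}{k+1}.
\]

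Then, conditioned on the randomness $\Omega$ of the range finder, $V$ is a fixed $n\times k$ orthonormal matrix, and Line~\ref{step:arp} runs Algorithm~\ref{alg:ARPnew} on it. Theorem~\ref{thm:arp} gives
\[
\mathbb{E}\bigl[\|A-A(:,S)V(S,:)^{-T}V^T\|_F^2\mid V\bigr]\le (k+1)\|A-AVV^T\|_F^2\le \tau .
\]
Applying the tower property over the range-finder randomness yields the claim.

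The main obstacle is that $k$ is random. Theorem~\ref{thm:arp} must therefore be applied conditionally, with the $(k+1)$ factor matched to the realized $k$. The stopping rule $\tau/(k+1)$ is designed precisely so that the bound $\le\tau$ holds pointwise for every realization of $V$, which makes averaging over $\Omega$ trivial. Secondary care is needed for the invariant on $E$ under exact orthonormality, and for the degenerate case where $V(S,:)$ is singular, which occurs with probability zero under ARP.
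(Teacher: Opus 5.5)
Your proposal is correct and follows the same route as the paper: apply Theorem~\ref{thm:arp} to the final basis $V$, then combine it with the stopping criterion $\|A-AVV^T\|_F^2\le\tau/(k+1)$. The additional pieces you supply are steps the paper's two-line proof leaves implicit: the invariant $E=\|A-AVV^T\|_F^2$, the termination argument, and the explicit conditioning on the range-finder randomness followed by the tower property.
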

\begin{proof}
    Let $V\in \R^{n\times k}$, using Theorem~\ref{thm:arp}, the line \ref{step:arp} of Algorithm \ref{alg:ARP_tolerance} implies that
    \be
    \label{eq:tol1}
    \mathbb{E}\|A - A(:,S)V(S,:)^{-T}V^T\|_F^2 = (k+1) \|A - AVV^T \|_F^2.
    \ee
    The loop of Algorithm \ref{alg:ARP_tolerance} guarantees that 
    \be
    \label{eq:tol2}
    \|A - AVV^T \|_F^2 \leq \frac \tau {k+1}.
    \ee
    Combining \eqref{eq:tol1} and \eqref{eq:tol2}, we get that
    \be
    \mathbb{E}\|A - A(:,S)V(S,:)^{-T}V^T\|_F^2 \leq \tau.
    \ee
\end{proof}
This bound reflects the fact that the quality of the final column subset selection is directly inherited from the accuracy of the intermediate range approximation.

\section{Numerical experiments}
\label{sec:numerics}
 
In this section, we evaluate the performance of our algorithms on several test matrices for
the column subset selection problem and the Nystr\"{o}m approximation of symmetric
positive semi-definite (SPSD) matrices. All algorithms were implemented and executed in
MATLAB R2024a. 

\subsection{Experiments of CSSP}
In this section, we show the performance of our algorithms for the column subset selection problem. For a fair comparison across different methods, we evaluate all algorithms at a common \emph{target rank} $k$, which corresponds to the number of selected columns. Let $d$ be the maximum target rank of a experiment, we use exact SVD to compute $V_d$, the dominant $d$ right singular vectors of a general matrix $A$ as input. We compare the following algorithms:
\begin{itemize}
    \item \textbf{Multi-stage ARP}: Algorithm~\ref{alg:MSARP}, applied with $V_{d}$ as mentioned above and a list $\mathbf{k} = \{k_1,k_2,...,k_t\}$ satisfying $\sum_{i=1}^t k_i = d$. The columns are selected sequentially: at stage $i$, $k_i$ new indices are sampled via CARP conditioned on the previously selected ones. The cumulative number of selected columns is
\[
k = k_1 + k_2 + \cdots + k_i.
\]
The index subset $S$ at this stage consists of all indices selected up to stage $i$.

    \item \textbf{One-shot ARP}: Algorithm~\ref{alg:ARPnew}, applied independently with the same target rank $k = k_1 + k_2 + \cdots + k_i$, using the truncated basis $V_k = V_d(:,1:k)$ to sample a subset of size $k$. In particular, for each $k$, the algorithm is rerun independently to sample a subset $S$ of size $k$. Thus, unlike multi-stage ARP, this approach does not preserve previously selected indices and instead performs a fresh sampling from the projection DPP defined by $V_k V_k^T$.

    \item \textbf{Leverage scores}: indices sampled independently with probabilities proportional to the squared row norms of $V_d$:
    \be
    \mathbb{P}(j) \propto \|V_d(j,:)\|_2^2.
    \ee
    For each $k$, the algorithm is rerun to select $k$ indices (with duplicates removed). 

    \item \textbf{CPQR}: Column-pivoted QR decomposition of $A$, retaining the first $k$ pivot columns.

    \item \textbf{Osinsky}: The deterministic algorithm of Osinsky~\cite{OSINSKY2025359} applied to $V_{k} = V_d(:,1:k)$. Same as one-shot ARP, for each $k$, the algorithm is rerun independently to sample a subset $S$ of size $k$.

    \item \textbf{SVD}: The optimal rank-$k$ approximation error $\bigl(\sum\limits_{i>k} \sigma_i(A)^2\bigr)^{1/2}$, serving as the optimal bound.
\end{itemize}

The approximation error for each method is measured as
\[
    E(S) = \|A -  Q_SQ_S^T A\|_F,
\]
where $Q_S$ is an orthonormal basis for $A(:,S)$. The approximation errors of orthogonal projectors are smaller than the errors of oblique projectors, as shown in Theorem~\ref{thm:conditional_dpp_general} and ~\ref{thm:CARP}. For the randomized methods (one-shot ARP,
multi-stage ARP, and leverage score sampling), each randomized algorithm is repeated $50$ times. We report the mean error and display error bars covering the central 80\% of outcomes,
discarding the top and bottom 10\% of trials. All errors are normalized by the Frobenius norm $\|A\|_F$.
 
\subsubsection{CSSP for synthetic matrices}
\label{subsec:cssp_synthetic}

\begin{figure}[htbp]
    \centering
    \begin{minipage}{0.48\linewidth}
        \centering
        \includegraphics[width=\linewidth]{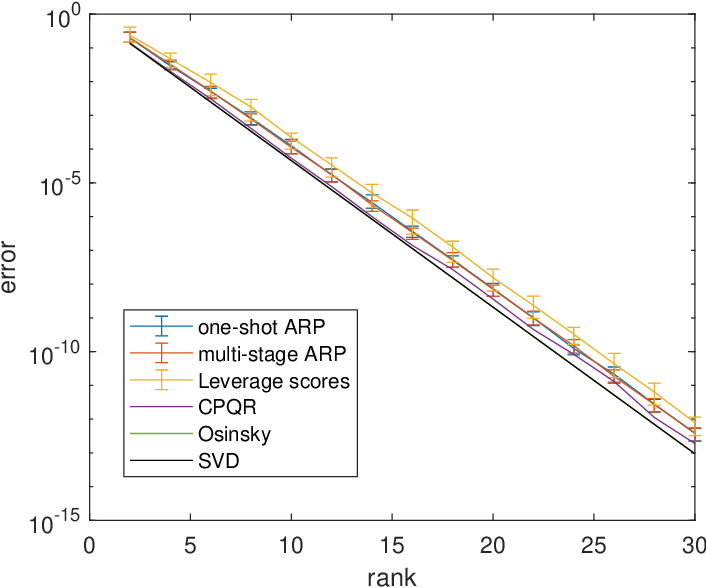}
        \caption{CSSP errors for a matrix with exponentially decaying singular
    values.}
        \label{fig:exp_decay}
    \end{minipage}
    \hfill
    \begin{minipage}{0.48\linewidth}
        \centering
        \includegraphics[width=\linewidth]{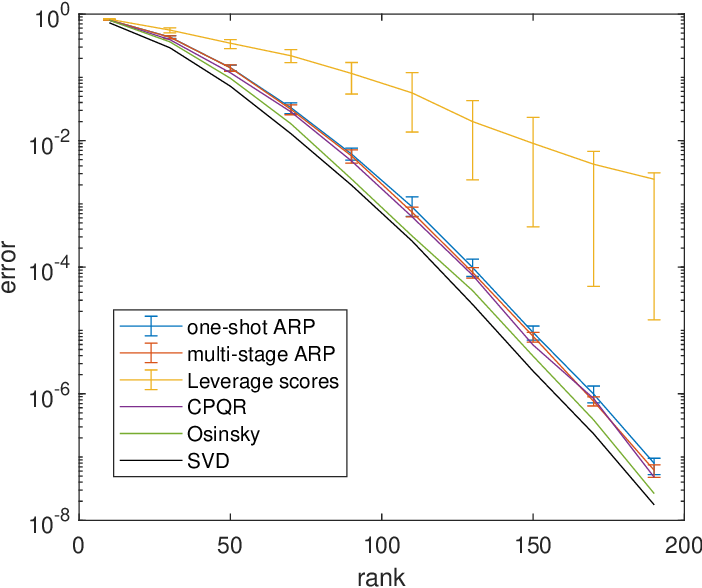}
        \caption{CSSP errors for a matrix with polynomially decaying singular
    values.}
        \label{fig:poly_decay}
    \end{minipage}
\end{figure}

We consider two families of synthetic matrices of size $2000 \times 2000$ with prescribed
spectral decay, commonly used as benchmarks in the numerical linear algebra literature.
 
\paragraph{Exponential decay.}
The first matrix $A$ has singular values decaying exponentially. We construct it by generating two random orthonormal matrix $U,V \in \R^{2000\times 2000}$ and letting $A = U\Sigma V^T$ where $\Sigma = \text{diag}\{1,e^{-1},e^{-2},\cdots,e^{-1999}\}$. Such matrices are nearly
low-rank, and most methods can exploit this structure effectively. We vary the target rank
over $k \in \{2, 4, \ldots, 30\}$, which means $k_1 = k_2 = ... = k_t = 2$ for multi-stage ARP.
 
The results are shown in Figure~\ref{fig:exp_decay}. One-shot ARP closely tracks the
optimal lower bound across all ranks, confirming the theoretical guarantee of
Theorem~\ref{thm:arp}. Multi-stage ARP performs comparably to one-shot ARP, demonstrating
that the incremental strategy of CARP does not significantly degrade quality relative to a
fresh one-shot selection. Both methods clearly outperform leverage score sampling, which
exhibits noticeably higher and more variable errors. The two deterministic ones CPQR and the algorithm of Osinsky perform well.
 
\paragraph{Polynomial decay.}
The second matrix $A$ has singular values decaying polynomially, constructed via generating two random orthonormal matrix $U,V \in \R^{2000\times 2000}$ and letting $A = U\Sigma V^T$ where $\Sigma = \text{diag}\{1,1/2^2,1/3^2,\cdots,1/1999^2\}$. Compared to the exponential case, the
singular values decay more slowly, making the problem harder at moderate ranks. We vary the
target rank over $k \in \{10, 15, \ldots, 100\}$,  which means $k_1 = 10$ and $k_2 = ... = k_t = 5$ for multi-stage ARP.
 
The results are shown in Figure~\ref{fig:poly_decay}. The qualitative picture is similar to
the exponential case, but the separation between methods is more pronounced at intermediate
ranks. One-shot ARP again closely follows the best approximation error. Multi-stage ARP
remains competitive with one-shot ARP. Both one-shot ARP and multi-stage ARP are slightly better than leverage score sampling. CPQR performs well in this matrix since the matrix structure is amenable to greedy
column selection. The algorithm of Osinsky consistently achieves near-optimal errors.

\begin{figure}[htbp]
    \centering
    \begin{minipage}{0.48\linewidth}
        \centering
        \includegraphics[width=\linewidth]{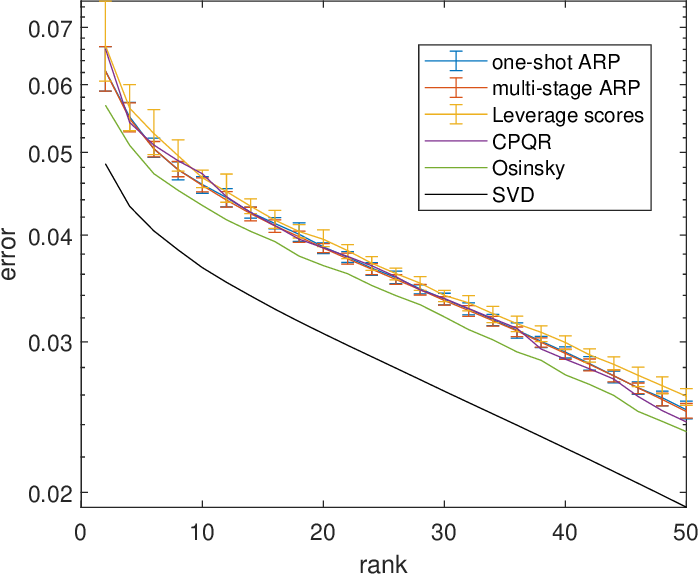}
        \caption{CSSP errors for the DNA microarray dataset.}
        \label{fig:dna}
    \end{minipage}
    \hfill
    \begin{minipage}{0.48\linewidth}
        \centering
        \includegraphics[width=\linewidth]{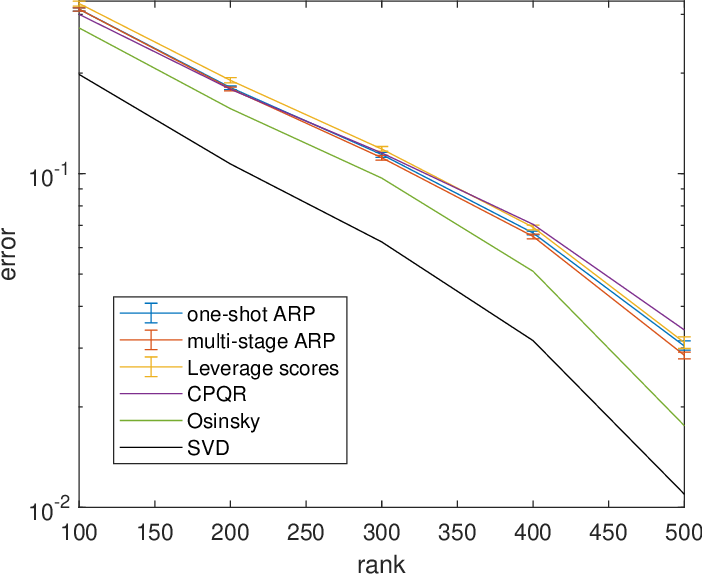}
        \caption{CSSP errors for the mnist dataset matrix.}
        \label{fig:mnist}
    \end{minipage}
\end{figure}

\subsubsection{CSSP for real-world data}
\label{subsec:cssp_real}
\paragraph{DNA microarray data.}
The first dataset is a DNA microarray matrix\footnote{Downloaded from \url{https://ftp.ncbi.nlm.nih.gov/geo/series/GSE10nnn/GSE10072/matrix/}.}, which contains gene expression measurements and serves as a representative example of
data arising in computational biology, where column subset selection can be used to identify
a small number of informative genes. The matrix has dimensions $m \times n$ and we vary the
target rank over $k \in \{2, 4, \ldots, 50\}$, which means that we set $t = 25$ and $k_1 = k_2 = ... = k_t = 2$ for multi-stage ARP.
 
The results are displayed in Figure~\ref{fig:dna}. The behavior is broadly similar to the
synthetic experiments. One-shot ARP achieves errors close to the optimal lower bound, while
multi-stage ARP is competitive and only slightly worse on average. The variance of leverage
score sampling is noticeably larger than that of ARP, and its mean error exceeds that of ARP
across most ranks. CPQR and Osinsky are again among the top-performing deterministic methods.
The real-world matrix has an irregular spectral profile, and the DPP-based methods handle
this gracefully by adapting the sampling probabilities at each step according to the current
residual subspace.

\paragraph{MNIST handwritten digits.}
The second dataset is the MNIST handwritten digit collection. We use $500$ images per digit
class (digits 0--9), yielding $n = 5000$ images in total. Each image is of size $28 \times 28$
pixels, which we flatten into a vector of length $784$, forming a data matrix
$A \in \mathbb{R}^{784 \times 5000}$. Column subset selection on $A$ corresponds to selecting
a small set of representative images whose linear span approximates the full dataset. We vary
the target rank over $k \in \{20, 40, \ldots, 300\}$, which means that we set $t = 15$ and $k_1 = k_2 = ... = k_t = 20$ for multi-stage ARP.
 
The results are shown in Figure~\ref{fig:mnist}. The MNIST matrix has a slowly decaying
singular value spectrum — a characteristic of real image data — which makes column subset
selection at large ranks non-trivial. One-shot ARP and multi-stage ARP perform nearly
identically across all ranks, where multi-stage ARP is slightly better. Leverage scores show slightly higher errors on average but
remain competitive throughout the rank range. CPQR behaves well at low ranks ($k \lesssim 60$),
and produces competitive approximation errors.
However, as the target rank increases, CPQR progressively deteriorates relative to all other
methods, eventually becoming the worst-performing algorithm in the comparison. 
In contrast, the ARP-based methods maintain a stable and near-optimal performance throughout,
illustrating the practical advantage of DPP-based global selection over greedy strategies at
larger ranks. The deterministic algorithm of Osinsky achieves the best errors among all methods
across the entire rank range.

\subsection{Experiments of Nystr\"{o}m approximation for SPSD matrices}
\label{subsec:nystrom}
In this section, we show the performance of our algorithms for Nystr\"{o}m approximations. As is shown in \cite{epperly2024gram}, using Gram correspondence, each strategy of CSSP for a matrix $A$ can be naturally applied to Nystr\"{o}m approximation for a  SPSD matrix $M = A^TA$. We still evaluate all algorithms at a common target rank $k$. For ARP-based algorithms(Algorithm~\ref{alg:ARPnew} and Algorithm~\ref{alg:MSARP} and the Algorithm of Osinsky), we compute the dominant $d$ eigenvectors $V_d$ of $M$, where $d$ is the maximum target rank of a experiment. We compare the following algorithms:

\begin{figure}[t]
    \centering
    \begin{subfigure}[b]{0.48\linewidth}
        \includegraphics[width=\linewidth]{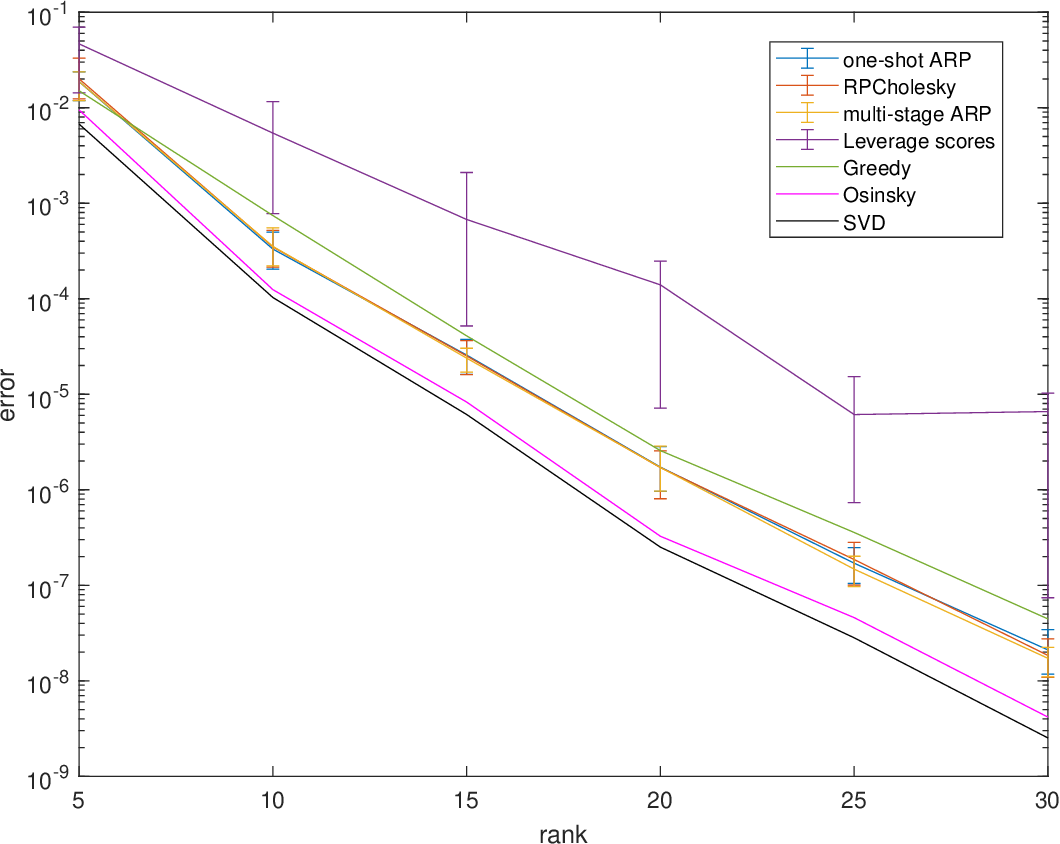}
        \caption{Nystr\"{o}m errors, two-moons kernel.}
        \label{fig:nystrom_moons}
    \end{subfigure}
    \hfill
    \begin{subfigure}[b]{0.48\linewidth}
        \includegraphics[width=\linewidth]{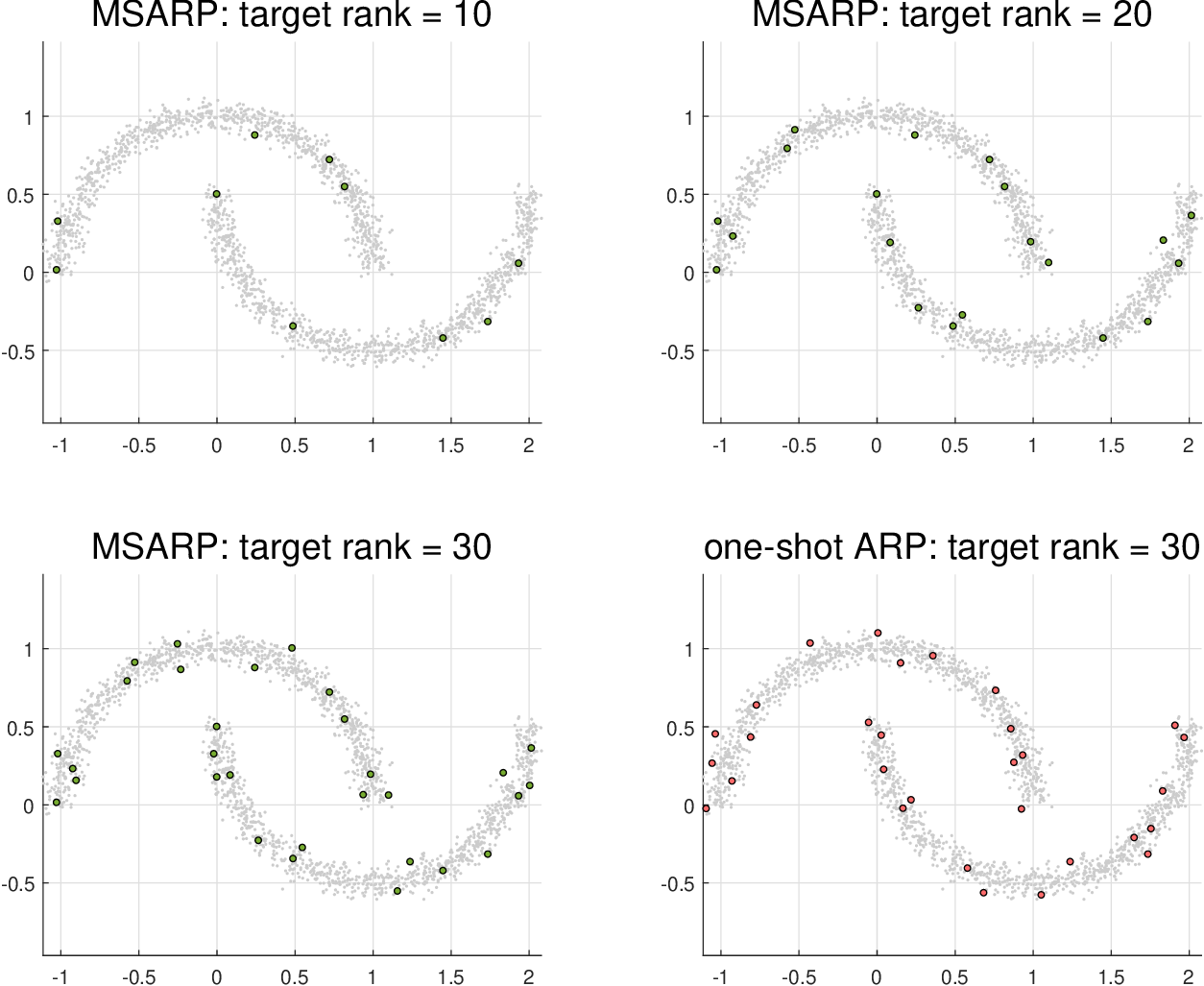}
        \caption{Point locations, two-moons kernel.}
        \label{fig:nystrom_moons_pivots}
    \end{subfigure}
    \caption{Nystr\"{o}m approximation of the two-moons kernel ($n = 2000$, bandwidth $= 2$,
    noise $= 0.05$). Left: trace-norm error vs.\ target rank. Right: selected points from
    MSARP at ranks 10, 20, 30 and from one-shot ARP at rank 30.}
    \label{fig:moons_combined}
\end{figure}

\begin{figure}[t]
    \centering
    \begin{subfigure}[b]{0.48\linewidth}

        \includegraphics[width=\linewidth]{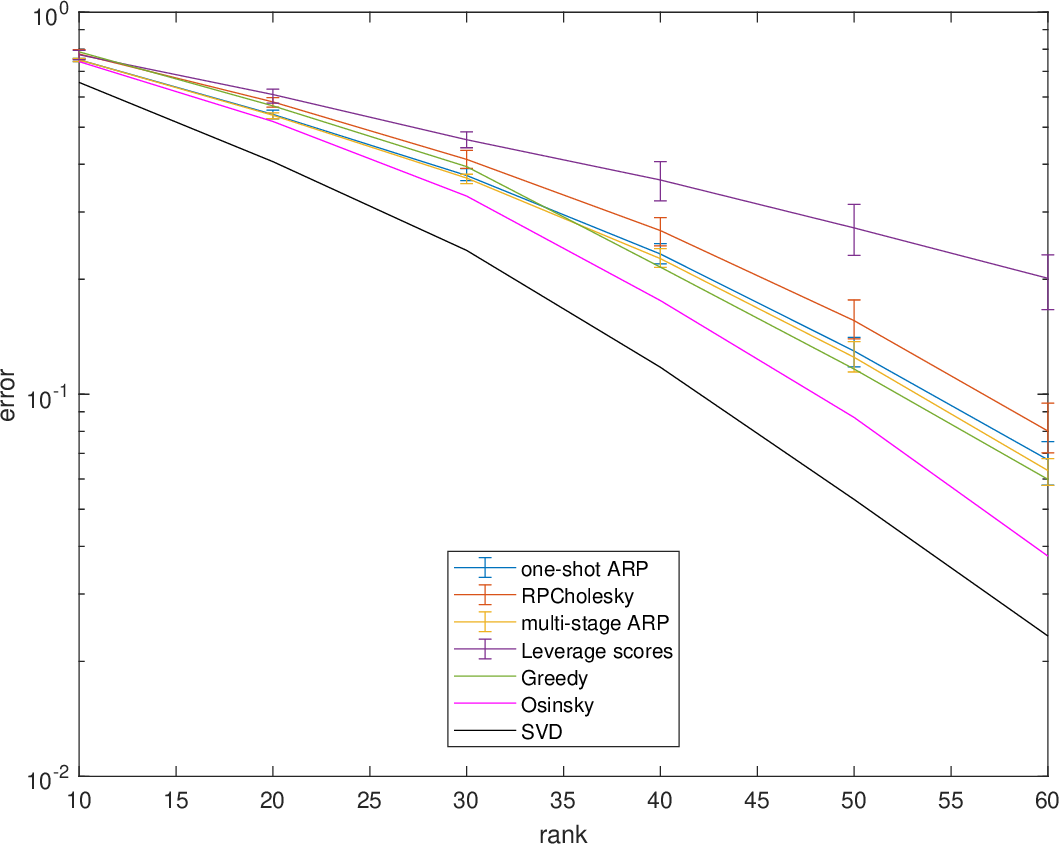}
        \caption{Nystr\"{o}m errors, two-smiles kernel.}
        \label{fig:nystrom_twosmiles}
    \end{subfigure}
    \hfill
    \begin{subfigure}[b]{0.48\linewidth}
        \includegraphics[width=\linewidth]{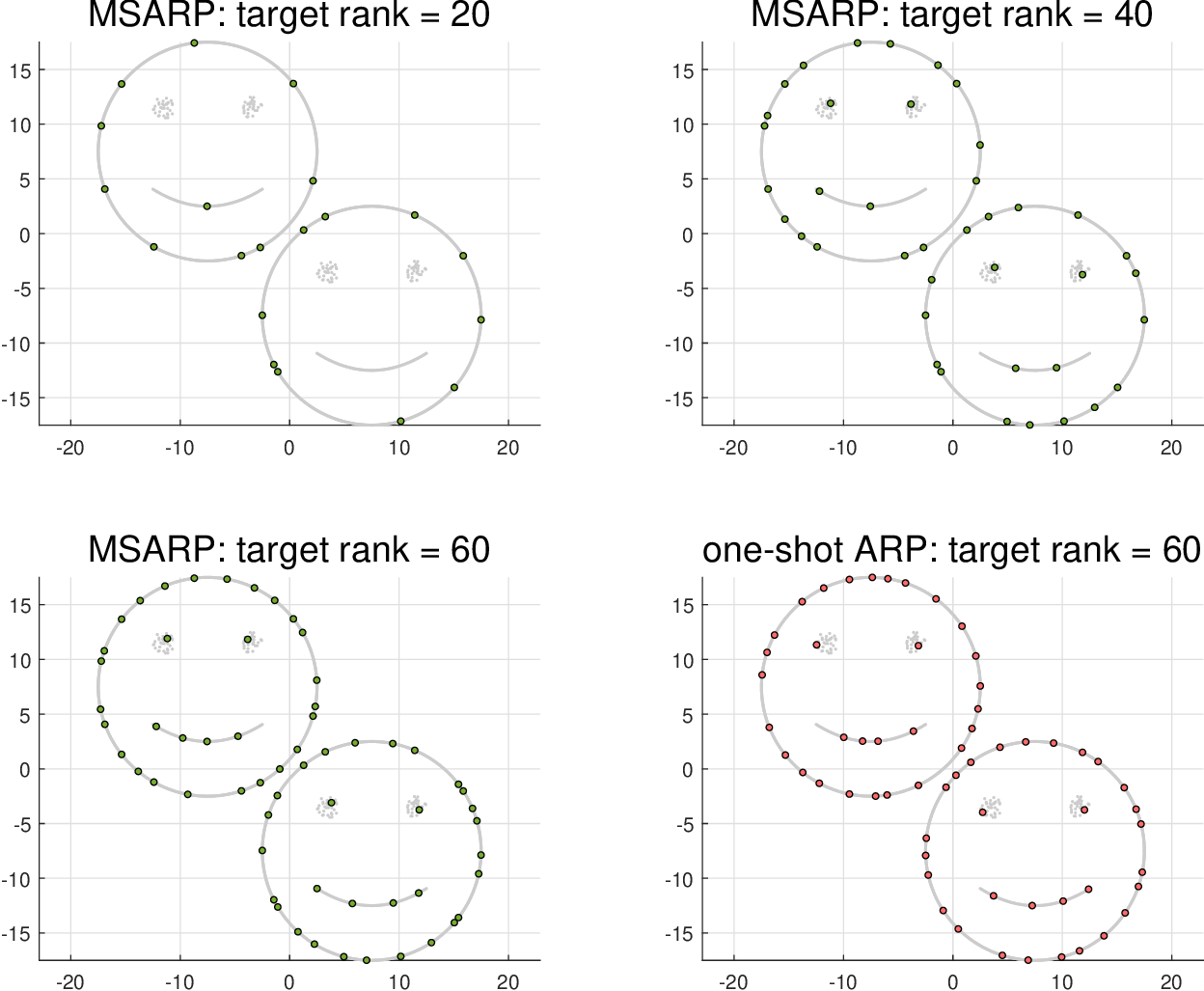}
        \caption{Point locations, two-smiles kernel.}
        \label{fig:nystrom_twosmiles_pivots}
    \end{subfigure}
    \caption{Nystr\"{o}m approximation of the two-smiles kernel ($n = 2000$, bandwidth $= 2$).
    Left: trace-norm error vs.\ target rank. Right: selected points from MSARP at ranks
    20, 40, 60 and from one-shot ARP at rank 60.}
    \label{fig:twosmiles_combined}
\end{figure}

\begin{itemize}
    \item \textbf{Multi-stage ARP}: Algorithm~\ref{alg:MSARP}, applied with $V_{d}$ as mentioned above and a list $\mathbf{k} = \{k_1,k_2,...,k_t\}$ satisfying $\sum_{i=1}^t k_i = d$. The columns are selected sequentially: at stage $i$, $k_i$ new indices are sampled via CARP conditioned on the previously selected ones. The cumulative number of selected columns is
\[
k = k_1 + k_2 + \cdots + k_i.
\]
The subset $S$ at this stage consists of all indices selected up to stage $i$.

    \item \textbf{One-shot ARP}: Algorithm~\ref{alg:ARPnew}, applied independently with the same target rank $k = k_1 + k_2 + \cdots + k_i$, using the truncated basis $V_k = V_d(:,1:k)$ to sample a subset of size $k$. In particular, for each $k$, the algorithm is rerun independently to sample a subset $S$ of size $k$. Thus, unlike multi-stage ARP, this approach does not preserve previously selected indices and instead performs a fresh sampling from the projection DPP defined by $V_k V_k^T$.

    \item \textbf{Leverage scores}: indices sampled independently with probabilities proportional to the squared row norms of $V_d$:
    \be
    \mathbb{P}(j) \propto \|V_d(j,:)\|_2^2.
    \ee
    For each $k$, the algorithm is rerun to select $k$ indices (with duplicates removed). 

    \item \textbf{RPCholesky}: Algorithm~\ref{alg:rpcholesky}, proposed in \cite{RPCholesky}, applied with $M$ and $d$, retaining the first $k$ pivots.
    \item \textbf{Greedy}: the Nystr\"{o}m approximation version of CPQR, each iteration selects the index of largest diagonal entry of $M$, followed with a rank-one update, retaining the first $k$ pivots.

    \item \textbf{Osinsky}: The deterministic algorithm of Osinsky~\cite{OSINSKY2025359} applied to $V_{k} = V_d(:,1:k)$. Same as one-shot ARP, for each $k$, the algorithm is rerun independently to sample a subset $S$ of size $k$.

    \item \textbf{SVD}: The  approximation nuclear norm error of truncated SVD: $\sum\limits_{i>k} \lambda_i(M)$, serving as the optimal bound.
\end{itemize}

Given a set of landmark indices $S \subset \{1,\ldots,n\}$
with $|S| = k$, 
the quality of Nyst\"{o}rm approximation is measured by the trace-norm error
\[
    E(S) = \|M - M(:,S)M(S,S)^{-1}M(S,:)\|_* = \mathrm{tr}\!\left(M - M(:,S)M(S,S)^{-1}M(S,:)\right).
\]
For the randomized methods (one-shot ARP,
multi-stage ARP, and leverage score sampling), each randomized algorithm is repeated $50$ times. We report the mean error and display error bars covering the central 80\% of outcomes,
discarding the top and bottom 10\% of trials. All errors are normalized by $\mathrm{tr}(M)$. 

\paragraph{Two-moons kernel.}
We construct a Gaussian kernel matrix $M \in \mathbb{R}^{n \times n}$ with $n = 2000$ data
points arranged in two interlacing semicircles\footnote{adapted from \url{https://scikit-learn.org/stable/modules/generated/sklearn.datasets.make_moons.html}.} The first coordinate of each data point is added with Gaussian noise $\epsilon \sim \mathcal{N}(0,0.05)$. The bandwidth is set as $2$. We let target ranks
$k$ vary over $\{5, 10, \ldots, 30\}$, which means we set $t = 6$ and $k_1 = k_2 = \cdots = k_t = 5$ for multi-stage ARP. The results in Figure~\ref{fig:nystrom_moons} show that one-shot ARP,
multi-stage ARP and RPCholesky achieve nearly identical errors and work slightly better than the greedy method.
In contrast, leverage scores exhibit larger errors and variance. The algorithm of Osinsky performs nearly optimally at each target rank.
 
Figure~\ref{fig:nystrom_moons_pivots} shows that multi-stage ARP incrementally selects points in both semicircles as the target rank
increases, which have similar diversity shown in the selection from one-shot ARP at rank 30.

\paragraph{Two-smiles kernel.}
The second kernel is constructed from $n = 2000$ points forming two smile shapes
in $\mathbb{R}^2$ (one above the other), with the same Gaussian bandwidth $\sigma = 2$ and
target ranks $k \in \{10, 20, \ldots, 60\}$, which means we set $t = 6$ and $k_1 = k_2 = \cdots = k_t = 10$ for multi-stage ARP.
 
The results of approximation errors are shown in
Figure~\ref{fig:nystrom_twosmiles}. One-shot ARP, multi-stage ARP, and the greedy method
behave similarly and are slightly better than 
RPCholesky. The algorithm Osinsky achieves slightly better errors at larger ranks, while leverage scores
show larger errors when the target rank increases. The plots in Figure~\ref{fig:nystrom_twosmiles_pivots} shows the process of how MSARP selects points and compares it with the selection from one-shot ARP at target rank $60$, which confirms that MSARP successfully
discovers both smile structures. At rank $20$, selected points are already spread across both smile
arcs. By rank $60$, the coverage of both arcs and the four eye clusters is visually well
balanced. MSARP can allocate new landmarks to under-represented regions rather than
resampling all at once.

\section*{Acknowledgments}
This project has received funding from the European Research
Council (ERC) under the European Union's Horizon 2020 research and innovation program (grant agreement No 810367). During the preparation of this manuscript, the authors used ChatGPT 5.2 (OpenAI) and Gemini 3.1 (Google) solely for language editing and improving the clarity and readability of the text, and a final proofreading.
\appendix

\section{Proof of Theorem~\ref{thm:conditional_dpp_general} and Theorem~\ref{thm:CARP}}
To avoid writing many transposes, we rewrite Theorem \ref{thm:CARP} in the form of selecting rows from the matrix $A\in \mathbb{R}^{m \times n}$. We firstly introduce a lemma\cite{OSINSKY2025359} discovered by Osinsky, which is useful for later derivation.

\begin{lemma}
\label{lma:osinsky}
Let $Q\in \R^{m\times k}$ be an orthogonal matrix and $S = (s_1,s_2,\cdots,s_k)$ be an index subset of $[\![m]\!]$. Defining an oblique projector $\Pi_S = I - Q(E_S^TQ)^{-1}E_S^T$, we have
\be \label{eq:osinsky_formula}
\Pi_S (I - QQ^T) = \Pi_S
\ee
\end{lemma}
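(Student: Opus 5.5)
The identity reduces to showing that the oblique projector $\Pi_S$ annihilates the range of $Q$. Expanding the left-hand side of \eqref{eq:osinsky_formula} gives
\[
\Pi_S(I-QQ^T)=\Pi_S-(\Pi_S Q)Q^T,
\]
so it is enough to prove $\Pi_S Q=0$. Throughout, I assume that the $k\times k$ matrix $E_S^TQ=Q(S,:)$ is nonsingular. This is implicit in the statement, since otherwise $\Pi_S$ is undefined. It is the analogue of the standing full-rank assumption on $V(S,:)$ in Theorem~\ref{thm:conditional_dpp_general}.

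The single computation is
\[
\Pi_S Q = Q - Q(E_S^TQ)^{-1}(E_S^TQ) = Q - Q = 0 .
\]
Substituting this into the expansion gives $\Pi_S(I-QQ^T)=\Pi_S$. For later use in the appendix, I would also note two facts that follow from the same computation. First, $\Pi_S^2=\Pi_S$, so $\Pi_S$ is a projector. Second, $E_S^T\Pi_S=E_S^T-E_S^T=0$. Together these say that $\Pi_S$ is the oblique projector with range $\ker(E_S^T)$ and null space $\mathrm{range}(Q)$.

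There is no real obstacle here. The only point needing care is the invertibility of $E_S^TQ$, and I would state it explicitly as a hypothesis. The lemma is then used in row form: applied to $A$, it replaces the error $\Pi_S A$ by $\Pi_S(I-QQ^T)A$. This is exactly how the residual $\widetilde A=A-AVV^T$ enters the bound in Theorem~\ref{thm:conditional_dpp_general}.
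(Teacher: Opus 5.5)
Your proof is correct and is essentially the paper's argument. The paper expands $\Pi_S(I-QQ^T)$ term by term and cancels $Q(E_S^TQ)^{-1}(E_S^TQ)Q^T$ against $QQ^T$, which is your observation $\Pi_S Q = 0$ written out inline; your explicit assumption that $E_S^TQ$ is nonsingular is the right one (the paper leaves it implicit), and neither argument actually uses $Q^TQ=I$.
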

\begin{proof}
We use straightforward calculation to verify \eqref{eq:osinsky_formula}:
\[
\begin{aligned}
        \Pi_S (I - QQ^T) &=I - QQ^T - Q(E_S^TQ)^{-1}E_S^T + Q(E_S^TQ)^{-1}(E_S^TQ)Q^T\\ 
        &= I - Q(E_S^TQ)^{-1}E_S^T\\
        &= \Pi_S
    \end{aligned}
\]
\end{proof}

Before proving Theorem~\ref{thm:CARP}, we need to introduce another lemma (see \cite{lsq_volumesampling} for proof), which quantifies the expectation of errors of least-square problems solved by volume sampling.

\begin{lemma}
\label{lma:dpp_lsq}
    Let $X\in \R^{m\times k}$ has full-rank and $y\in \R^m$. If we sample $k$ indices $S\sim VS_k(X^T)$, we can define an approximate solution
    $$\widehat \beta \coloneqq X(S, :)^{-1}y(S)$$ of the least square problem $\min\limits_{\beta \in \R^{k}} \|X\beta - y \|_2$. We have
    \be
        &\mathbb{E} \widehat \beta = X^{\dagger}y\\
        &\mathbb{E} (\|X \widehat \beta - y \|_2^2) \leq (k+1) \|(I - XX^{\dagger})y\|_2^2.
    \ee
\end{lemma}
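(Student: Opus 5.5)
The plan is to prove both claims by direct computation with Cramer's rule and the Cauchy--Binet formula, using the explicit form of the sampling distribution. Since $S\sim VS_k(X^T)$ means $\mathbb{P}(S)\propto \det(XX^T... )$ restricted appropriately, i.e.\ $\mathbb{P}(S)=\det(X(S,:))^2/\det(X^TX)$ for $|S|=k$, the normalization follows from Cauchy--Binet: $\sum_{|S|=k}\det(X(S,:))^2=\det(X^TX)>0$, which is nonzero because $X$ has full column rank. Only subsets with $\det(X(S,:))\neq 0$ have positive probability, so $\widehat\beta$ is well defined almost surely.

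\textbf{Unbiasedness.} I would work componentwise. By Cramer's rule,
\[
\widehat\beta_i=\frac{\det\bigl(X_{i\leftarrow y}(S,:)\bigr)}{\det(X(S,:))},
\]
where $X_{i\leftarrow y}$ denotes $X$ with its $i$th column replaced by $y$. Then
\[
\mathbb{E}\,\widehat\beta_i=\frac{1}{\det(X^TX)}\sum_{|S|=k}\det(X(S,:))\det\bigl(X_{i\leftarrow y}(S,:)\bigr).
\]
By Cauchy--Binet the sum equals $\det(X^TX_{i\leftarrow y})$. The matrix $X^TX_{i\leftarrow y}$ is $X^TX$ with its $i$th column replaced by $X^Ty$. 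Cramer's rule applied to the normal equations $X^TX\beta=X^Ty$ then identifies $\det(X^TX_{i\leftarrow y})/\det(X^TX)$ as $(X^\dagger y)_i$, which gives $\mathbb{E}\,\widehat\beta=X^\dagger y$.

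\textbf{Error bound.} The residual $X\widehat\beta-y$ vanishes on $S$, since $\widehat\beta$ interpolates there. Hence
\[
\|X\widehat\beta-y\|_2^2=\sum_{j\notin S}\bigl(X(j,:)\widehat\beta-y_j\bigr)^2 .
\]
The key step is to take the $(k+1)\times(k+1)$ matrix $[X,\ y](S\cup\{j\},:)$ and apply a Schur complement on the block $X(S,:)$. This gives
\[
\det\bigl([X,\ y](S\cup\{j\},:)\bigr)=\pm\det(X(S,:))\,\bigl(y_j-X(j,:)X(S,:)^{-1}y(S)\bigr).
\]
It follows that $\det(X(S,:))^2\,(X(j,:)\widehat\beta-y_j)^2=\det\bigl([X,\ y](T,:)\bigr)^2$ with $T=S\cup\{j\}$. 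Therefore
\[
\mathbb{E}\|X\widehat\beta-y\|_2^2=\frac{1}{\det(X^TX)}\sum_{|S|=k}\sum_{j\notin S}\det\bigl([X,\ y](S\cup\{j\},:)\bigr)^2 .
\]
Each $(k+1)$-subset $T$ arises from exactly $k+1$ pairs $(S,j)$, so the double sum equals $(k+1)\sum_{|T|=k+1}\det([X,\ y](T,:))^2$. By Cauchy--Binet this is $(k+1)\det([X,\ y]^T[X,\ y])$. A block determinant (Schur complement of $X^TX$) gives
\[
\det([X,\ y]^T[X,\ y])=\det(X^TX)\,\bigl(y^Ty-y^TX(X^TX)^{-1}X^Ty\bigr)=\det(X^TX)\,\|(I-XX^\dagger)y\|_2^2 .
\]
Dividing by $\det(X^TX)$ yields the bound, in fact with equality.

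The main obstacle is the Schur-complement/determinant identity linking the pointwise residual to a $(k+1)$-minor, together with the counting argument over pairs $(S,j)$. Sign conventions are irrelevant there because every such quantity appears squared. One degenerate case needs care: if $y$ lies in the range of $X$, both sides are zero and the argument is consistent.
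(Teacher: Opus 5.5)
Your argument is the standard Derezi\'nski--Warmuth proof: Cramer's rule plus Cauchy--Binet for unbiasedness, then the $(k+1)$-minor identity and a double count for the loss. The paper does not prove this lemma itself; it cites that reference, and your proof reproduces its argument. The unbiasedness half is correct as written. Any $S$ with $\det(X(S,:))=0$ contributes a zero term to $\sum_S \det(X(S,:))\det(X_{i\leftarrow y}(S,:))$, so summing over all $k$-subsets is harmless there.

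The error-bound half, however, contains a false equality. Your identity $\det(X(S,:))^2(X(j,:)\widehat\beta-y_j)^2=\det([X,\ y](S\cup\{j\},:))^2$ is valid only when $X(S,:)$ is invertible. So the expectation is $\det(X^TX)^{-1}$ times the double sum restricted to those $S$ with $\det(X(S,:))\neq 0$. For singular $X(S,:)$ the $(k+1)$-minor $\det([X,\ y](S\cup\{j\},:))$ need not vanish, so you cannot extend the sum to all $k$-subsets with equality.

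Concretely, take $k=1$, $X=(0,1)^T$, $y=(1,0)^T$. Volume sampling picks $S=\{2\}$ with probability one, so $\widehat\beta=0$ and $\mathbb{E}\|X\widehat\beta-y\|_2^2=1$. But $(k+1)\|(I-XX^\dagger)y\|_2^2=2$. The discrepancy is exactly the term $S=\{1\}$, $j=2$, whose minor is $\pm 1$.

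The repair is immediate. The omitted terms are nonnegative, so replace that ``$=$'' by ``$\le$''; your double counting and Cauchy--Binet computation then give precisely the stated bound. Your closing claim ``in fact with equality'' must be dropped or restricted, for example to $X$ in general position (every $k$ rows linearly independent), where no term is omitted. This is why the lemma is stated as an inequality.
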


\begin{theorem}
\label{thm:main_result_for_conditionalDPP}
Let $A \in \mathbb{R}^{m \times n}$, $Q \in \mathbb{R}^{m \times (k+p)}$ satisfying $Q^TQ = I$, $K = QQ^T$ and $S$ be an $k$-index subset of $\{1,2,\cdots,m\}$ with $Q(S,:)$ having full row rank. We sample $T\sim \mathrm{DPP}(\widetilde K)$,
    where
    \[
    \widetilde K = K - K(:,S) K(S,S)^{-1} K(S,:).
\]
Let $U = [S, T]$, then we have
\be
\mathbb{E}\!\left( \| A - QQ(U,:)^{-1} A(U,:) \|_F^2  \mid S \right)
\leq (1+p) \| \widetilde{A} - QQ(S,:)^{\dagger}\widetilde{A}(S,:) \|_F^2,
\ee
where $\widetilde A = (I - QQ^T)A$.
\end{theorem}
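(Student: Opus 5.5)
The plan is to reduce the conditional error to a column-wise least-squares problem in the "new" directions, and then invoke Lemma~\ref{lma:dpp_lsq}. First I would use the preprocessing rotation from the analysis of Algorithm~\ref{alg:CARP}. Take an orthogonal $G$ with $QG=[Y,\ Z]$, $Y\in\R^{m\times k}$, $Z\in\R^{m\times p}$, and $Y(S,:)=L$ nonsingular, $Z(S,:)=0$. As computed before Theorem~\ref{thm:conditional_dpp_general}, $\widetilde K=ZZ^T$. Hence $T\sim\mathrm{DPP}(ZZ^T)$ is a projection DPP. By the lemma relating $\mathrm{VS}_k$ and projection DPPs, this is $\mathrm{VS}_p(Z^T)$, so $|T|=p$ and $Z(T,:)$ is nonsingular almost surely. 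Both sides of the claim are invariant under $Q\mapsto QG$: we have $QG\,(Q(U,:)G)^{-1}=QQ(U,:)^{-1}$, and $QQ^T$ is unchanged. So I may assume $Q=[Y,Z]$.

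Next I would write $Q(U,:)$ in block form, with the rows of $S$ first:
\[
Q(U,:)=\begin{bmatrix} L & 0\\ Y(T,:) & Z(T,:)\end{bmatrix}.
\]
This matrix is block lower triangular and therefore invertible. Inverting it explicitly gives
\[
QQ(U,:)^{-1}A(U,:)=YL^{-1}A(S,:)+Z\,Z(T,:)^{-1}\bigl(A(T,:)-Y(T,:)L^{-1}A(S,:)\bigr).
\]
Set $R:=A-YL^{-1}A(S,:)$. Then the error becomes
\[
A-QQ(U,:)^{-1}A(U,:)=R-Z\,Z(T,:)^{-1}R(T,:).
\]
Column by column, this is exactly the volume-sampling least-squares estimator of Lemma~\ref{lma:dpp_lsq}, with $X=Z$ and $y$ a column of $R$. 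Summing over the columns and using $ZZ^\dagger=ZZ^T$ yields
\[
\mathbb{E}\bigl(\|A-QQ(U,:)^{-1}A(U,:)\|_F^2\mid S\bigr)\le (1+p)\,\|(I-ZZ^T)R\|_F^2 .
\]

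It remains to identify $(I-ZZ^T)R$ with $\widetilde A-QQ(S,:)^\dagger\widetilde A(S,:)$. Split $A=QQ^TA+\widetilde A$. Because $Z(S,:)=0$, we have $(QQ^TA)(S,:)=LY^TA$, so $YL^{-1}(QQ^TA)(S,:)=YY^TA$. This is the same mechanism as in Lemma~\ref{lma:osinsky}. Substituting gives
\[
R=ZZ^TA+\widetilde A-YL^{-1}\widetilde A(S,:).
\]
Applying $I-ZZ^T$ and using $Z^TY=0$ and $Z^T\widetilde A=0$ leaves
\[
(I-ZZ^T)R=\widetilde A-YL^{-1}\widetilde A(S,:).
\]
Finally, $Q(S,:)=[L,\ 0]G^T$ has full row rank, so $Q(S,:)^\dagger=G\begin{bmatrix}L^{-1}\\0\end{bmatrix}$. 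Therefore $QQ(S,:)^\dagger=[Y,Z]\begin{bmatrix}L^{-1}\\0\end{bmatrix}=YL^{-1}$, which gives the claimed right-hand side.

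I expect the main obstacle to be bookkeeping rather than any deep step. There are three things to get right. The first is justifying that Lemma~\ref{lma:dpp_lsq} applies: $Z$ has full column rank and $T$ is distributed as $\mathrm{VS}_p(Z^T)$, with the index set $U$ read in the order $[S,T]$. The second is checking that the orthogonal change of basis $G$ does not alter either side of the inequality. The third is the cancellation $YL^{-1}(QQ^TA)(S,:)=YY^TA$, which is what makes the $k$ fixed rows contribute no $(1+k)$-type factor in the conditional bound.
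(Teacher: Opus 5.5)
Your proof is correct. It rests on the same core ingredient as the paper's: once the $S$-rows are interpolated exactly, all the randomness lives in the $p$ null-space directions of $Q(S,:)$. Your $Z$ is the paper's $QB$ after rotation, and Lemma~\ref{lma:dpp_lsq} is applied there with $T\sim\mathrm{VS}_p(Z^T)$. Your explicit block-triangular inverse is in fact the paper's splitting $W_U=W_1+W_2$ written in rotated coordinates. The only difference is a shift by $Q^TA$, because you keep $A$ rather than $\widetilde A$.

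The two arguments differ in how they get from this to the bound. The paper first replaces $A$ by $\widetilde A$ via Lemma~\ref{lma:osinsky}. It then uses two Pythagorean identities, $\|\widetilde A-QW_U\|_F^2=\|\widetilde A\|_F^2+\|W_U\|_F^2$ and $\|W_U\|_F^2=\|W_1\|_F^2+\|W_2\|_F^2$. It applies the lemma to its residual $\widetilde A-QW_1$, which is orthogonal to $QB$, only to extract $\mathbb{E}_T\|W_2\|_F^2\le p\|\widetilde A-QW_1\|_F^2$. Finally it reassembles using $\|\widetilde A-QW_1\|_F^2=\|\widetilde A\|_F^2+\|W_1\|_F^2$.

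You instead identify the entire error as the volume-sampled least-squares residual $R-Z\,Z(T,:)^{-1}R(T,:)$, with your $R=A-YL^{-1}A(S,:)$. A single application of the lemma then finishes the proof. The projector $I-ZZ^T$ in the lemma discards the $ZZ^TA$ component automatically. Your cancellation $YL^{-1}(QQ^TA)(S,:)=YY^TA$ does the work of Lemma~\ref{lma:osinsky}, and no orthogonality bookkeeping is needed.

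The paper's coordinate-free version avoids the explicit rotation. It also exposes the three-term decomposition $\|\widetilde A\|_F^2+\|W_1\|_F^2+\mathbb{E}_T\|W_2\|_F^2$, which it reuses for Theorem~\ref{thm:main_result}. Your endpoint serves that purpose equally well, since $\|\widetilde A-YL^{-1}\widetilde A(S,:)\|_F^2=\|\widetilde A\|_F^2+\|L^{-1}\widetilde A(S,:)\|_F^2=\|\widetilde A\|_F^2+\|W_1\|_F^2$.
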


\begin{proof}
    Let
\be
\widetilde A = (I - Q Q^T) A.
\ee
By Lemma \ref{lma:osinsky},
\be
(I - Q Q(U,:)^{-1} E_U^T) A
= (I - Q Q(U,:)^{-1} E_U^T)\widetilde A.
\ee

Let
\be
\label{eq:def_wu}
W_U = Q(U,:)^{-1} \widetilde A(U,:),
\ee
then
\be
\label{eq:pf1}
\begin{aligned}
  \| \widetilde A - Q Q(U,:)^{-1} \widetilde A(U,:) \|_F^2 &= \|\widetilde A - Q W_U\|_F^2\\
&= \|\widetilde A\|_F^2 + \|Q W_U\|_F^2\\
&= \|\widetilde A\|_F^2 + \|W_U\|_F^2.  
\end{aligned}
\ee
The second equality is derived from that $range(\widetilde A ) \subset range(I - QQ^T )$, which is orthogonal to $Q$.
Let
\be
\label{eq:def_w1}
W_1 = Q(S,:)^\dagger \widetilde A(S,:)
= Q(S,:)^T (Q(S,:)Q(S,:)^T)^{-1} \widetilde A(S,:),
\ee
and define
\be
\label{eq:def_r}
R = \widetilde A - Q W_1.
\ee

Since $Q(S,:) \in \mathbb{R}^{k \times (k+p)}$ has a null space of dimension $p$,
let $B \in \mathbb{R}^{(k+p)\times p}$ be an orthonormal basis of $\mathrm{Null}(Q(S,:))$,
i.e.,
\be \label{eq:orthogonality}
B^T B = I_p, \quad Q(S,:) B = 0.
\ee

Let
\be
\label{eq:def_w2}
W_2 = B (Q(T,:)B)^{-1} R(T,:), 
\ee
we first show that $W_U$ can be split into $W_1$ and $W_2$.

By \eqref{eq:def_w1} and \eqref{eq:orthogonality}, we have
\be
\label{eq:qs}
Q(S,:)W_1 = \widetilde A(S,:), \quad Q(S,:)W_2 = 0,
\ee
and using \eqref{eq:def_w2}, we have
\be
Q(T,:)W_2 = R(T,:) = \widetilde A(T,:) - Q(T,:)W_1,
\ee
which implies
\be
\label{eq:qt}
Q(T,:)(W_1 + W_2) = \widetilde A(T,:).
\ee

Thus by \eqref{eq:qs} and \eqref{eq:qt}, we have 
\be
\label{eq:qu}
Q(U,:)(W_1 + W_2) =  \begin{bmatrix}
&Q(S,:)(W_1 + W_2)\quad\\&Q(T,:)(W_1 + W_2)\quad
\end{bmatrix} = \widetilde A(U,:).
\ee

Combining \eqref{eq:def_wu} and \eqref{eq:qu}, we obtain
\be
\label{eq:split_wu}
W_1 + W_2 = W_U
\ee
Note that the columns of $W_1$ lie in the span of $Q(S,:)^T$, while the columns of $W_2$ lie in the span of $B$, hence $range(W_1)$ is orthogonal to $range(W_2)$

Therefore,
\be
\label{eq:norm_wu}
\|W_U\|_F^2 = \|W_1\|_F^2 + \|W_2\|_F^2. 
\ee

We compute $\mathbb{E}_T[\|W_2\|_F^2 \mid S]$.

Consider a least square problem
\be
\min\limits_{M\in \R^{p\times n}} \|Q B M - R\|_F^2,
\ee
Sampling rows indexed by $T\sim VS_p(QB)$ is equivalent to sampling ones by $T\sim DPP(\widetilde K)$ and we obtain the approximation:
\be
\label{eq:hatm}
\hat M = (Q(T,:)B)^{-1} R(T,:), \quad W_2 = B \hat M.
\ee

By Lemma \ref{lma:dpp_lsq}, 
\be
\label{eq:condexp_w2}
\mathbb{E}_T\!\left( \|Q B \widehat M - R\|_F^2 \mid S \right) \leq \mathbb{E}_T\!\left( \|Q W_2 - R\|_F^2 \mid S \right)
= (p+1)\|R\|_F^2.
\ee

Using \eqref{eq:def_w1},\eqref{eq:def_r} and \eqref{eq:orthogonality}. we get
\be
\label{eq:orthogonality2}
R^T Q B = \widetilde A^T Q B - W_1^T Q^T Q B = -(B^T W_1)^T = 0.
\ee

Thus by \eqref{eq:def_w2}, \eqref{eq:hatm} and \eqref{eq:orthogonality2},
\be \label{eq:norm_w2}
\begin{aligned}
\mathbb{E}_T(\|W_2\|_F^2 \mid S) &= \mathbb{E}_T(\|QW_2\|_F^2 \mid S)\\
&=\mathbb{E}_T(\|QB\widehat M\|_F^2 \mid S)\\
&=\mathbb{E}_T(\|QB\widehat M - R\|_F^2 \mid S) - \|R\|_F^2\\
&\leq p\|R\|_F^2\\
&= p\|\widetilde A -QW_1\|_F^2.
\end{aligned}
\ee
Combining \eqref{eq:pf1},\eqref{eq:def_w1},\eqref{eq:split_wu}and \eqref{eq:norm_w2}, we can end the proof:
\be
\begin{aligned}
    \mathbb{E}_T(\| \widetilde A - Q Q(U,:)^{-1} \widetilde A(U,:) \|_F^2\mid S) &= \|\widetilde A\|_F^2 + \mathbb{E}_T(\| W_U\|_F^2\mid S)\\
   &=\|\widetilde A\|_F^2 + \| W_1\|_F^2 + \mathbb{E}_T(\| W_2\|_F^2\mid S) \\
   & \leq \|\widetilde A\|_F^2 + \| W_1\|_F^2 + p\|\widetilde A -QW_1\|_F^2\\
   & = (p+1)\|\widetilde A -QW_1\|_F^2\\
   & = (p+1)\|\widetilde A -QQ(S,:)^\dagger \widetilde A(S,:)\|_F^2.
\end{aligned}
\ee
\end{proof}

\begin{theorem}
\label{thm:main_result}
    Let $A \in \mathbb{R}^{m \times n}$, $Q_1 \in \mathbb{R}^{m \times k}$, $Q_2 \in \mathbb{R}^{m \times p}$, $Q = [Q_1, Q_2]$ satisfy $Q^TQ = I$ and $K = QQ^T$. We firstly sample $S\sim DPP(Q_1Q_1^T)$ and then sample $T \sim \mathrm{DPP}(\widetilde K)$,
    where
    \[
    \widetilde K = K - K(:,S) K(S,S)^{-1} K(S,:).
\]
Let $U = [S, T]$. Then we have
\be
\mathbb{E}\!\left( \| A - QQ(U,:)^{-1} A(U,:) \|_F^2  \right)
\le (1+p)(1+k) \| A - QQ^TA \|_F^2.
\ee
\end{theorem}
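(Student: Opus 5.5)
Conditioning on $S$ and applying the tower property reduces the statement to Theorem~\ref{thm:main_result_for_conditionalDPP}, since $S\sim\mathrm{DPP}(Q_1Q_1^T)$ gives $|S|=k$ with $Q_1(S,:)$ (hence $Q(S,:)$) of full row rank almost surely. That theorem yields
\[
\mathbb{E}\bigl(\|A-QQ(U,:)^{-1}A(U,:)\|_F^2\bigr)\le (1+p)\,\mathbb{E}_S\|\widetilde A-QQ(S,:)^{\dagger}\widetilde A(S,:)\|_F^2,\qquad \widetilde A=(I-QQ^T)A.
\]
It remains to show $\mathbb{E}_S\|\widetilde A-QQ(S,:)^{\dagger}\widetilde A(S,:)\|_F^2\le (1+k)\|\widetilde A\|_F^2$, where $\|\widetilde A\|_F^2=\|A-QQ^TA\|_F^2$.

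Write $Q(S,:)=[Q_1(S,:),\,Q_2(S,:)]$ with $Q_1(S,:)$ square and invertible. Since $\widetilde A$ lies in the range of $I-QQ^T$, we have $\|\widetilde A-QW\|_F^2=\|\widetilde A\|_F^2+\|W\|_F^2$ for every $W$. Among all $W$ with $Q(S,:)W=\widetilde A(S,:)$, the pseudoinverse solution $W_1=Q(S,:)^{\dagger}\widetilde A(S,:)$ has minimal Frobenius norm. A competing feasible choice is $W'=\begin{bmatrix}Q_1(S,:)^{-1}\widetilde A(S,:)\\0\end{bmatrix}$, so $\|W_1\|_F\le\|W'\|_F$. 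This gives
\[
\|\widetilde A-QW_1\|_F^2\le \|\widetilde A\|_F^2+\|Q_1(S,:)^{-1}\widetilde A(S,:)\|_F^2=\|\widetilde A-Q_1Q_1(S,:)^{-1}\widetilde A(S,:)\|_F^2,
\]
where the equality uses $Q_1^T\widetilde A=0$ and $Q_1^TQ_1=I$.

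The right-hand side is the ARP error for the matrix $\widetilde A$ with basis $Q_1$. Because $S\sim\mathrm{DPP}(Q_1Q_1^T)$ is exactly ARP, Theorem~\ref{thm:arp} in row form, or equivalently Lemma~\ref{lma:dpp_lsq} applied column by column with $X=Q_1$, gives
\[
\mathbb{E}_S\|\widetilde A-Q_1Q_1(S,:)^{-1}\widetilde A(S,:)\|_F^2\le(k+1)\|(I-Q_1Q_1^T)\widetilde A\|_F^2=(k+1)\|\widetilde A\|_F^2.
\]
Chaining the three bounds gives the constant $(1+p)(1+k)$.

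The step I expect to need the most care is the comparison between the pseudoinverse term on $Q(S,:)$ and the square-inverse term on $Q_1(S,:)$. The minimum-norm argument should settle it, but it relies on the Pythagorean identity holding uniformly in $W$. The remaining points to check are measure-zero issues: $Q(U,:)$ is invertible almost surely, which follows because the conditional DPP selects $T$ with $Q(T,:)B$ nonsingular.
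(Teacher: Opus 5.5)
Your proposal is correct and follows essentially the same route as the paper. Condition on $S$, use the conditional bound to reduce to $(1+p)\bigl(\|\widetilde A\|_F^2+\mathbb{E}_S\|W_1\|_F^2\bigr)$, dominate $\|W_1\|_F$ by $\|Q_1(S,:)^{-1}\widetilde A(S,:)\|_F$, and finish with the ARP/volume-sampling bound for $Q_1$; the only (cosmetic) difference is that you get the domination from the minimum-norm property of the pseudoinverse with the feasible competitor $W'$, whereas the paper uses $(Q(S,:)Q(S,:)^T)^{-1}\preceq(Q_1(S,:)Q_1(S,:)^T)^{-1}$ in the trace formula for $\|W_1\|_F^2$.
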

\begin{proof}   
Following the proof of Theorem~\ref{thm:main_result_for_conditionalDPP}, we have
\be
 \mathbb{E}_T(\| \widetilde A - Q Q(U,:)^{-1} \widetilde A(U,:) \|_F^2\mid S) =\|\widetilde A\|_F^2 + \| W_1\|_F^2 + \mathbb{E}_T(\| W_2\|_F^2\mid S)
\ee
We now estimate $\mathbb{E}_S\|W_1\|_F^2$.

\be
\|W_1\|_F^2
= \|Q(S,:)^\dagger \widetilde A(S,:)\|_F^2
= \mathrm{tr}\!\left( \widetilde A(S,:)^T (Q(S,:)Q(S,:)^T)^{-1} \widetilde A(S,:) \right).
\ee

Since $Q = [Q_1, Q_2]$, then
\be
Q(S,:) Q(S,:)^T = Q_1(S,:)Q_1(S,:)^T + Q_2(S,:)Q_2(S,:)^T
\succeq Q_1(S,:)Q_1(S,:)^T,
\ee
hence
\be
(Q(S,:) Q(S,:)^T)^{-1} \preceq (Q_1(S,:)Q_1(S,:)^T)^{-1}.
\ee

Thus
\be
\|W_1\|_F^2
\le \|Q_1(S,:)^{-1} \widetilde A(S,:)\|_F^2.
\ee

Since $S \sim \mathrm{DPP}(Q_1Q_1^T) = VS_k(Q_1^T)$, using Lemma \ref{lma:dpp_lsq} again, we have
\be
\mathbb{E}_S \|\widetilde A - Q_1 Q_1(S,:)^{-1} \widetilde A(S,:)\|_F^2
\leq (k+1)\|\widetilde A\|_F^2.
\ee
By 
\be
\label{eq:norm_w1}
\begin{aligned}
\mathbb{E}_S \|W_1\|_F^2 &\le \mathbb{E}_S \|Q_1(S,:)^{-1} \widetilde A(S,:)\|_F^2 \\
&=\mathbb{E}_S \|Q_1Q_1(S,:)^{-1} \widetilde A(S,:)\|_F^2\\
&=\mathbb{E}_S \|\widetilde A - Q_1Q_1(S,:)^{-1} \widetilde A(S,:)\|_F^2 - 
\|\widetilde A\|_F^2 \\
&\leq k \|\widetilde A\|_F^2    
\end{aligned}
\ee

\medskip

Combining \eqref{eq:pf1}, \eqref{eq:norm_wu}, \eqref{eq:norm_w2} and \eqref{eq:norm_w1}, and taking total expectation, we get
\[
\mathbb{E}\|\widetilde A - Q W_U\|_F^2
\le \|\widetilde A\|_F^2 (1 + k + p(1+k))
= (1+k)(1+p)\|\widetilde A\|_F^2.
\]
\end{proof}

 \bibliographystyle{plain}
\bibliography{reference}
\end{document}